\documentclass[letterpaper, 10 pt, conference]{ieeeconf}  
\newif\ifDraft
\Drafttrue

\newcommand{\papertitle}{Distributionally Robust Optimization using Cost-Aware Ambiguity Sets}

\IEEEoverridecommandlockouts                              

\overrideIEEEmargins                                      
\pdfminorversion=4

\makeatletter
\let\NAT@parse\undefined
\makeatother

\usepackage{mynotation}

\makeatletter
\newcounter{ALC@unique}
\makeatother

\usepackage[draft=true,xcolor=false,clevethm=true]{defaultpackages}
\pgfplotsset{compat=1.8}
\usepackage{lipsum}  
\usepackage{mathtools}

\acrodef{ERM}{empirical risk minimization}
\acrodef{DRO}{distributionally robust optimization}
\acrodef{SAA}{sample average approximation}
\acrodef{LP}{linear program}
\acrodef{LPing}[LP]{linear programming}
\newacroindefinite{LP}{an}{a}

\newcommand{\cadro}{\textsc{Cadro}}
\newcommand{\tvdro}{\textsc{Tv-dro}}

\newif\ifDetails
\newif\ifExtended
\newif\ifArxiv
\newif\ifShowillustration
\newif\ifHoeffding

\providecommand\papertitle{Untitled}
\title{\LARGE \bf
\papertitle
}
\author{Mathijs~Schuurmans and Panagiotis~Patrinos
  \thanks{M. Schuurmans and P. Patrinos are with the Department 
  of Electrical Engineering (\textsc{esat-stadius}), KU Leuven, 
  Kasteelpark Arenberg 10, 3001 Leuven, Belgium.
  Email: \texttt{\{mathijs.schuurmans, panos.patrinos\}@esat.kuleuven.be}}
  \thanks{
This work was supported by
the Research Foundation Flanders (FWO) research projects G081222N, G033822N, G0A0920N;
European Union's Horizon 2020 research and innovation programme under the Marie Sk{\l}odowska-Curie grant agreement No. 953348;
Ford KU Leuven Research Alliance Project KUL0075;
}
}

\newcommand{\Vop}{V^{\!\star}}
\newcommand{\Xop}{X^{\!\star}}
\newcommand{\xop}{x^\star}
\newcommand{\trs}{\tau}     
\newcommand{\p}{p^\star}    
\newcommand{\ord}[2]{#1_{(#2)}}
\newcommand{\kthr}{\kappa}
\newcommand{\ph}[1][]{
	\def\tempa{}
	\def\tempb{#1}
	\hat{p}
	\ifx\tempa\tempb\else
		_{{\scriptscriptstyle#1}}
	\fi
}   
\newcommand{\rr}[2]{r_{#1}}   
\newcommand{\D}{\hat{\Xi}}  
\newcommand{\Dt}{\hat{\Xi}_{\scriptscriptstyle{\mathrm{T}}}}  
\newcommand{\Do}{\hat{\Xi}_{\scriptscriptstyle{\mathrm{C}}}} 
\newcommand{\Vest}{\hat{V}} 
\newcommand{\xest}{\hat{x}} 
\newcommand{\Verm}{\Vest^{\scriptscriptstyle{\mathrm{SAA}}}}
\newcommand{\Vmax}{\bar{V}}
\newcommand{\Vmx}{\Vmax_{\!\ss}}
\newcommandx{\maxelem}[4][1=i,2=\nModes,4={}]{\max \{#3_#1 #4\}_{#1 \in [#2]}}
\newcommand{\err}{\varepsilon}   
\newcommand{\xx}{\bar{x}}

\providecommand{\ss}{m}
\renewcommand{\ss}{m}           
\newcommand{\rl}[1]{#1} 
\providecommand{\conf}{\beta}
\newcommand\ambca[2]{\amb_{#1}(#2)}  
\newcommand{\mbd}{\alpha}     
\newcommand{\mbdca}[2]{\mbd_{#1}(#2)} 
\newcommand{\vi}{\eta}
\newcommand{\vmx}{\bar{\eta}}

\DeclarePairedDelimiter{\inprod}{\langle}{\rangle}
\newcommand{\mx}[1]{#1_{\mathrm{max}}}
\newcommand{\mn}[1]{#1_{\mathrm{min}}}

\newcommand{\training}{training}
\newcommand{\calibration}{calibration}
\newcommand{\Div}{\mathcal{D}}
\newcommand{\TV}{\mathrm{TV}}
\newcommand{\KL}{\mathrm{KL}}
\newcommand{\Wa}{\mathrm{W}}
\newcommand{\rW}{r^{\scriptscriptstyle \mathrm{W}}}
\newcommand{\ellb}{\overline{\ell}}
\setlist[description]{leftmargin=0pt,labelindent=0cm}
\newcommand{\rev}[2]{\ifArxiv#2\else\bgroup\color{ForestGreen}#2\egroup\fi}

\Detailstrue
\Extendedtrue
\Arxivtrue
\Showillustrationtrue

\ifArxiv
\Hoeffdingtrue
\else 
\Hoeffdingfalse 
\fi

\begin{document}

\maketitle
\thispagestyle{empty}
\pagestyle{empty}

\begin{abstract}
\rev{}{
We present a novel framework
for distributionally robust optimization (DRO), 
called cost-aware DRO (\cadro).
The key idea of \cadro{} is to exploit the cost structure 
in the design of the ambiguity set to reduce conservatism.
Particularly, the set specifically constrains the
worst-case distribution
along the direction in which the 
expected cost of an approximate solution
increases most rapidly.
We prove that \cadro{} provides both a high-confidence
upper bound and a consistent estimator of the out-of-sample expected cost, and 
show empirically that it produces solutions that are 
substantially less conservative than existing DRO methods, 
while providing the same guarantees.
}
\end{abstract}

\section{Introduction}
We consider the stochastic programming problem 
\begin{equation}\label{eq:sprog-1}
   \minimize_{x \in X} \E [ \ell(x, \xi) ]
\end{equation}
with $X \subseteq \Re^n$ a nonempty, closed set of feasible decision variables,
$\xi \in \Xi$ a random variable following probability measure $\prob$, 
and $\ell: \Re^n \times \Xi \to \Re$ a known cost function.
This problem is foundational in many fields, including
operations research \cite{shapiro_LecturesStochasticProgramming_2021}, 
machine learning \cite{hastie_ElementsStatisticalLearning_2009},
and control (e.g., stochastic model predictive control) \cite{mesbah_StochasticModelPredictive_2016}.

Provided that the underlying probability measure $\prob$ is known exactly,
this problem can effectively be solved using traditional 
stochastic optimization methods \cite{royset_OptimizationPrimer_2021,shapiro_LecturesStochasticProgramming_2021}.
In reality, however, only a data-driven estimate $\hat{\prob}$ of $\prob$ is 
typically available, which may be subject to misestimations---known as \textit{ambiguity}. 
Perhaps the most obvious method for handling this issue is to disregard this 
ambiguity and instead apply a \ac{SAA} (also known as \ac{ERM} in the machine learning literature), where 
\eqref{eq:sprog-1} is solved using $\hat{\prob}$ as a \textit{plug-in} 
replacement for $\prob$.
However, this is known to produce overly optimistic estimates of 
the optimal cost 
\cite[Prop. 8.1]{royset_OptimizationPrimer_2021},
potentially resulting in unexpectedly high realizations of the
cost when deploying the obtained optimizers on new, unseen samples.
This downward bias of \ac{SAA} is closely related to the issue of overfitting, 
and commonly refered to as the \textit{optimizer's curse}
\cite{smith_OptimizerCurseSkepticism_2006a,vanparys_DataDecisionsDistributionally_2021}.

Several methods have been devised over the years to combat this undesirable behavior.
Classical techniques such as regularization and cross-validation are 
commonly used in machine learning \cite{hastie_ElementsStatisticalLearning_2009},
although typically, they are used as heuristics, providing few rigorous guarantees, 
in particular for small sample sizes.
Alternatively, the suboptimality gap of the \ac{SAA} solution may be statistically estimated 
by reserving a fraction of the dataset for independent replications \cite{bayraksan_AssessingSolutionQuality_2006}. 
However, these results are typically based on asymptotic arguments,
and are therefore not 
valid in the low-sample regime. Furthermore, although this type of approach 
may be used to \textit{validate} the \ac{SAA} solution, 
it does not attempt to \textit{improve} it, by taking into account 
possible estimation errors. 
More recently, \ac{DRO} has garnered considerable attention, 
as it provides a principled way of obtaining a 
high-confidence upper bound on the true out-of-sample cost \cite{
delage_DistributionallyRobustOptimization_2010,
mohajerinesfahani_DatadrivenDistributionallyRobust_2018,
vanparys_DataDecisionsDistributionally_2021}.
\rev{}{In particular, its capabilities to provide 
rigorous performance and safety guarantees has 
made it an attractive 
technique for data-driven and learning-based 
control \cite{
hakobyan_DistributionallyRobustRisk_2022,
schuurmans_SafeLearningbasedMPC_2023,
tac2023}}.
\ac{DRO} refers to a broad class of methods in which 
a variant of \eqref{eq:sprog-1} is solved 
where $\prob$ is replaced with a worst-case distribution within
a statistically estimated set of distributions, called an \textit{ambiguity set}.

As the theory essentially requires only that the ambiguity set 
contains the true distribution with a prescribed level of confidence, 
a substantial amount of freedom is left in the design of the geometry of 
these sets. As a result, many 
different classes of ambiguity sets have been 
proposed in the literature, e.g., 
Wasserstein ambiguity sets \cite{mohajerinesfahani_DatadrivenDistributionallyRobust_2018}, 
divergence-based ambiguity sets \cite{tac2023,
vanparys_DataDecisionsDistributionally_2021,
bayraksan_DataDrivenStochasticProgramming_2015}
and moment-based ambiguity sets \cite{
delage_DistributionallyRobustOptimization_2010,
coppens_DatadrivenDistributionallyRobust_2020
};
See \cite{rahimian_FrameworksResultsDistributionally_2022,lin_DistributionallyRobustOptimization_2022}
for recent surveys. 

Despite the large variety of existing classes of ambiguity sets, 
a common characteristic is that their design is
considered separately from the optimization problem in question.
Although this simplifies the analysis in some cases,
it may also induce a significant level of conservatism; 
In reality, we are only interested in excluding distributions from the 
ambiguity set which actively contribute to increasing the worst-case cost.
Requiring that the true distribution deviates little from the data-driven estimate 
in \textit{all directions} may therefore be unnecessarily restrictive.
This intuition motivates the introduction of a new \ac{DRO} methodology, which is aimed 
at designing the geometry of the ambiguity sets with the original problem \eqref{eq:sprog-1}
in mind. The main idea is that by only excluding those distributions that maximally affect 
the worst-case cost, higher levels of confidence can be attained without 
introducing additional conservatism to the cost estimate.

\paragraph*{Contributions}
\rev{We highlight the following contributions of this work.}{}
\begin{inlinelist}
    \item We propose a novel class of ambiguity sets 
        for\rev{ the purpose of}{} \ac{DRO}, 
   taking into account the structure of the underlying optimization problem; 
   \item We prove that the \ac{DRO} cost is both a high-confidence upper bound
   and a consistent estimate of the optimal cost of the original stochastic program \eqref{eq:sprog-1};  
   \item We demonstrate empirically that the provided ambiguity set outperforms existing alternatives. 
\end{inlinelist}

\paragraph*{Notation}

We denote
\(
    [n] = \{1, \dots, n\},
\)
for $n \in \N$.
$|S|$ denotes the cardinality of a (finite) set $S$. 
$\e_i \in \Re^n$ is the $i$th standard basis vector in 
$\Re^n$. Its dimension $n$ will be clear from context.
\ifArxiv
We denote the level sets of a function $f: \Re^n \to \Re$ as 
\( \lev_{\leq \alpha} f \dfn \{x \in \Re^{n} \mid f(x) \leq \alpha \} \). 
\fi
We write `$\as$' to signify that a random event occurs \textit{almost surely}, i.e., with probability 1.
\ifHoeffding
We denote the largest and smallest entries of a vector $v \in \Re^n$ 
as $\mx{v} \dfn \max_{i \in [n]} v_i$ and $\mn{v} = \min_{i \in [n]}v_i$, 
respectively, and define its \textit{range} as
\(
    \rg(v) \dfn \mx{v} - \mn{v}.
\)
\fi
$\delta_{X}$ is the indicator of a set $X$: $\delta_X(x) = 0$ if $x \in X$,
$+\infty$ otherwise.

\section{Problem Statement} \label{sec:problem-statement}
We will assume that the random variable $\xi$ is finitely supported,
so that
without loss of generality, we may write 
 $\Xi = \{\rl{1}, \dots, \rl{\nModes}\}$. 
This allows us to define the probability mass vector 
\(
    p = (\prob[\xi = \rl{i}])_{i=1}^{\nModes},
\)
and enumerate the \textit{cost realizations} \(\ell_i = \ell(\argdot, \rl{i})$, $i \in [\nModes]\). 
Furthermore, it will be convenient to introduce the mapping 
\(
    L: \Re^{n} \to \Re^{\nModes}
\) 
as
\( 
    L(x) = (\ell_1(x), \dots, \ell_{\nModes}(x)).
\)
We will pose the following (mostly standard) regularity assumption on the cost function.
\begin{assumption}[Problem regularity] \label{asm:regularity}
    For all $i \in [\nModes]$
    \begin{conditions}
        \item \label{asm:lsc} $\ell_i$ is continuous on $X$; 
        \item \label{asm:level-bounded} $\ellb_i \dfn \ell_i + \delta_{X}$ is level-bounded; 
    \end{conditions}
\end{assumption}
Since any continuous function is \ac{lsc}, 
\cref{asm:regularity} combined with the closedness of $X$ implies 
\textit{inf-compactness}, which ensures attainment of the minimum \cite[Thm. 1.9]{rockafellar_VariationalAnalysis_1998}.
Continuity of $\ell_i$ is used mainly 
\ifArxiv
in \cref{lem:parametric-stability}
\else\fi
to establish continuity of the 
solution mapping $\Vop$---defined below, see \eqref{eq:parametric}. 
However, a similar result can be obtained by replacing \cref{asm:lsc}
by \textit{lower semicontinuity} and \textit{uniform level-boundedness} on $X$. 
However, for ease of exposition, we will not cover this modification explicitly. 

Let $\p \in \simplex_{\nModes} \dfn \{ p \in \Re^\nModes_+ \mid \tsum_{i=1}^{\nModes} p_i = 1 \}$
denote the true-but-unknown probability mass vector, and 
define 
\( 
    V: \Re^n \times \simplex_{\nModes} \to \Re: (x, p) \mapsto \inprod{p, L(x)}
\), 
to obtain the parametric optimization problem with optimal cost and solution set 
\begin{equation} \label{eq:parametric}
    \Vop(p) = \min_{x \in X} V(x, p) \text{ and } \Xop(p) = \argmin_{x \in X} V(x, p).
\end{equation}
The solution of \eqref{eq:sprog-1} is retrieved by \rev{selecting $p = \p$.}
{solving \eqref{eq:parametric} with $p = \p$.}

Assume we have access to
a dataset $\D \dfn \{\xi_1, \dots, \xi_{\ss} \} \in \Xi^\ss$
collected i.i.d. from $\p$.
In order to avoid the aforementioned
downward bias of \ac{SAA},
our goal is to obtain a 
data-driven decision $\xest_{\ss}$ along with an estimate  
\( \Vest_{\ss} \) such that
\begin{equation}\label{eq:guarantee-coverage}
    \prob[ V(\xest_{\ss}, \p) \leq \Vest_{\ss} ] \geq 1 - \conf, 
\end{equation}
where $\conf \in (0,1)$ is a user-specified confidence level.

We address this problem by means of \acl{DRO}, 
where instead of \eqref{eq:parametric}, one solves the surrogate problem 
\begin{equation} \tag{DRO} \label{eq:dro-problem}
    \Vest_{\ss} = \min_{x \in X} \max_{p \in \amb_\ss} V(x, p). 
\end{equation}
Here, $\amb_\ss \subseteq \simplex_{\nModes}$ is a (typically data-dependent, and thus, random) set of 
probability distributions that is designed to contain the true distribution $\p$ with 
probability $1 - \conf$, ensuring that \eqref{eq:guarantee-coverage} holds. 
Trivially, \eqref{eq:guarantee-coverage} is satisfied with $\conf=0$ by taking $\amb_\ss \equiv \simplex_{\nModes}$.
This recovers a robust optimization method, i.e., $\min_{x \in X} \max_{i \in [\nModes]} \ell_i(x)$. 
Although it satisfies \eqref{eq:guarantee-coverage}, this robust approach tends to be overly conservative
as it neglects all available statistical data.
The aim of distributionally robust optimization is to additionally ensure 
that $\Vest_\ss$ is a consistent estimator, i.e., 
\begin{equation} \label{eq:guarantee-consistency}
    \lim_{m \to \infty} \Vest_m = \Vop(p^\star), \quad \as. 
\end{equation}
We will say that a class of ambiguity sets is \textit{admissible} if the 
solution $\Vest_\ss$ of the resulting \ac{DRO} problem \eqref{eq:dro-problem}
satisfies \eqref{eq:guarantee-coverage} and \eqref{eq:guarantee-consistency}.
\rev{The objective of this work}{}
Our objective
is to develop a methodology for 
constructing admissible ambiguity sets that take into account the 
structure of \eqref{eq:dro-problem} and in doing so, provide 
tighter estimates of the cost, while maintaining \eqref{eq:guarantee-coverage} with 
a given confidence level $\conf$.

\section{Cost-Aware DRO} \label{sec:cadro}
In this section, we describe the proposed \ac{DRO}
framework, which we will refer to as \textit{cost-aware \ac{DRO}} (\cadro).
The overall method is summarized in \cref{alg:cadro}. 

\subsection{Motivation} \label{sec:motivation}

 \begin{figure}[ht!]
    \centering
    \begin{minipage}{0.35\linewidth}
     \centering
     \includegraphics[height=\textwidth]{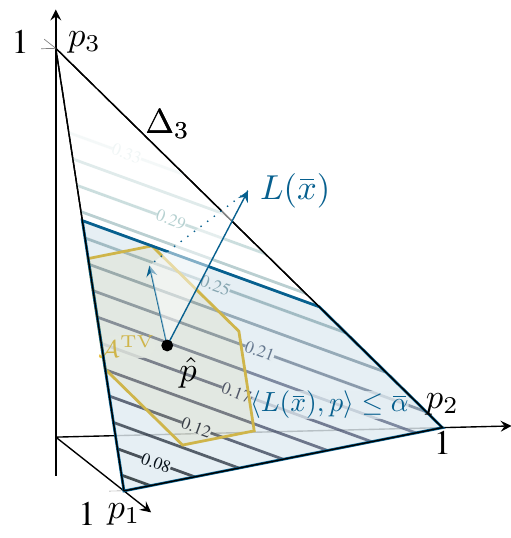}
    \end{minipage}
    \begin{minipage}{0.35\linewidth}
     \centering
     \includegraphics[height=\textwidth]{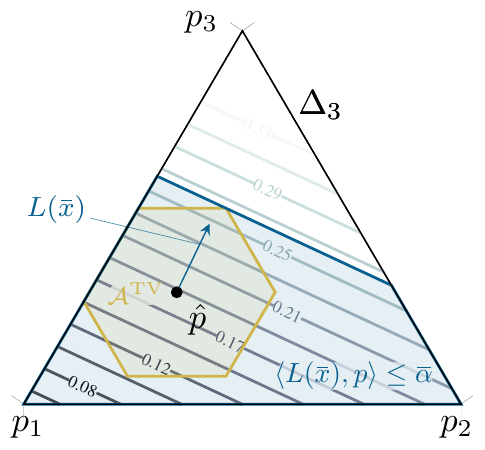}
    \end{minipage}
    \caption{
     Conceptual motivation for the structure of the
     ambiguity set \eqref{eq:ambiguity-shape}. 
     The cost contour lines 
     $\{ p \in \simplex_{3}\mid \inprod{L(\xx), p} = \alpha \}$ corresponding to
     some $\xx \in X$ are shown for increasing values of $\alpha$ (dark to light), 
     together with the sets $\ambTV \dfn \simplex_{3} \cap \ball_1(\ph, \varrho)$
     and $\amb \dfn \{p \in \simplex_{3} \mid \inprod{L(\xx), p} \leq \bar{\alpha} \}$.
     Here, $\varrho > 0$ is determined to satisfy \eqref{eq:inclusion}
     and $\bar{\alpha} = \max_{p \in \ambTV} \inprod{L(\xx), p}$.
     Since $\ambTV \subset \amb$, $\amb$ satisfies \eqref{eq:inclusion}
     with a higher confidence level $1-\conf$, but nevertheless, we have
     $\max_{p \in \amb} V(\xx, p) = \max_{p \in \ambTV} V(\xx, p)$.
    }
    \label{fig:rationale}
 \end{figure}

We start by providing some intuitive motivation.
Consider the problem \eqref{eq:dro-problem}.
In order to provide a guarantee of the form
\eqref{eq:guarantee-coverage}, 
it obviously suffices to design $\amb_\ss$ 
such that 
\begin{equation}  \label{eq:inclusion}
    \prob[ \p \in \amb_\ss ] \geq 1 - \conf. 
\end{equation}
However, this condition alone still leaves 
a considerable amount of freedom to the designer. 
A common approach is to select 
$\amb_{\ss}$ to be a ball (expressed in some 
statistical metric/divergence) 
 around an empirical 
estimate $\ph$ of the distribution.
Depending on the choice of metric/divergence 
(e.g., total variation \cite{rahimian_IdentifyingEffectiveScenarios_2019},
 Kullback-Leibler \cite{vanparys_DataDecisionsDistributionally_2021},
  Wasserstein \cite{mohajerinesfahani_DatadrivenDistributionallyRobust_2018}, \dots
), 
several possible variants may be obtained. 
Using concentration inequalities, 
one can then select the appropriate radius of this ball,
such that \eqref{eq:inclusion} is satisfied.
A drawback of this approach, however, 
is that the construction of $\amb_\ss$ is 
decoupled from the original problem \eqref{eq:sprog-1}.
Indeed, given that $\amb_{\ss}$ takes the form of a ball,
\eqref{eq:inclusion} essentially requires the deviation
of $\ph$ from $\p$ to be small \textit{along every direction}.
If one could instead enlarge the ambiguity set
without increasing the worst-case cost, then \eqref{eq:inclusion} 
could be guaranteed for smaller values of $\conf$ without 
introducing additional conservatism.
This idea is illustrated in \cref{fig:rationale}.

Conversely, for a fixed confidence level $\conf$, 
one could thus construct a smaller upper bound $\Vest_{\ss}$, 
by restricting the choice of $p$ only in a judiciously selected direction.
Particularly, we may set
$
\amb_{\ss} 
= 
\{p \in \simplex_{\nModes} \mid \inprod{L(\xx), p} \leq \mbd_{\ss} \}
$
for some candidate solution $\xx \in X$,
where $\mbd_{\ss}$ is the smallest (potentially data-dependent) quantity 
satisfying \eqref{eq:inclusion}.
This directly yields an upper bound on the estimate
$\Vest_\ss$. Namely, 
for $\xop \in \Xop(\p)$, 
we have with probability $1 - \conf$,
\[ 
\begin{aligned}
  V(\xop, \p)
    &\stackrel{\hypertarget{a}{(a)}}{\leq} 
  V(\xest_{\ss}, \p)
    \leq \max_{p \in \amb_{\ss}} V(\xest_{\ss}, p) = \Vest_\ss \\
    &= \min_{x \in X} \max_{p \in \amb_\ss} V(x,p) 
    \stackrel{\hypertarget{b}{(b)}}{\leq} \max_{p \in \amb_{\ss}} V(\bar{x}, p) = \mbd_{\ss}.
\end{aligned}
\]
Here, inequalities \hyperlink{a}{(a)} and \hyperlink{b}{(b)}
become equalities when $\xest_m = \xop = \xx$.
Thus, a reasonable aim would be to select $\xx$ 
to be a good approximation of $\xop$.
We will return to the matter of selecting $\xx$ in \cref{sec:selection-v}. 
First, however, 
we will assume $\xx$ to be given and focus on 
establishing the coverage condition \eqref{eq:inclusion}.

\subsection{Ambiguity set parameterization and coverage}\label{sec:coverage}

Motivated by the previous discussion,
we propose a family of ambiguity sets parameterized 
as follows. 
Let $v \in \Re^{\nModes}$ be a fixed vector
(we will discuss the choice of $v$ in \cref{sec:selection-v}).
Given a sample $\D = \{ \xi_{1}, \dots, \xi_{\ss} \}$ of size $|\D| = \ss$
drawn i.i.d. from $\p$, 
we consider ambiguity sets of the form
\rev{
let 
\begin{equation} \label{eq:empirical}
    \ph[\D] \dfn \tfrac{1}{\ss} \tsum_{k=1}^{\ss} \e_{\xi_k}
\end{equation}
denote the corresponding empirical probability mass vector.
Furthermore, let $v \in \Re^{\nModes}$ be a fixed vector
(we will discuss the choice of $v$ in \cref{sec:selection-v}).
We then consider ambiguity sets of the form
}{}
\begin{equation} \label{eq:ambiguity-shape}
\rev{
\ambca{\D}{v} \dfn \{ p \in \simplex_{\nModes} \mid \inprod{p - \ph[\D],v} 
\leq \rr{|\D|}{\conf} \rg(v) \}, 
}{
\ambca{\D}{v} \dfn 
\{ p \in \simplex_{\nModes} \mid \inprod{p, v} \leq \mbdca{\D}{v} \}, 
}
\end{equation}
\rev{
where $r_{|\D|}$ is 
selected to ensure that \eqref{eq:inclusion} holds
for $\amb_\ss = \ambca{\D}{v}$, 
as shown in the following result.
Note that the probability in \eqref{eq:inclusion} is taken 
with respect to the dataset $\D$.
\begin{proposition}[Coverage] \label{prop:coverage}
    Let $v \in \Re^{\nModes}$ be fixed and let  
    $\ambca{\D}{v}$ be defined as in \eqref{eq:ambiguity-shape}, 
    with respect to a random sample $\D$ with $|\D| = \ss$, drawn i.i.d. from $\p \in \simplex_\nModes$. 
    If 
    \begin{equation}\label{eq:radius}
        \rr{\ss}{\conf} = \min\big\{1, \sqrt{\tfrac{\log \nicefrac{1}{\conf}}{2\ss}}\big\}, 
    \end{equation}
    then $\prob[\p \in \ambca{\D}{v}] \geq 1 - \conf$. 
\end{proposition}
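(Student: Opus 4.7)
The plan is to reduce the coverage condition to a standard one-sided Hoeffding bound on bounded i.i.d. random variables. Write $Y_k \dfn \inprod{\e_{\xi_k}, v} = v_{\xi_k}$ for $k \in [\ss]$, so that by definition of $\ph[\D]$, the inner product $\inprod{\ph[\D], v} = \tfrac{1}{\ss}\sum_{k=1}^{\ss} Y_k$ is the empirical mean of the $Y_k$'s. Since the $\xi_k$ are i.i.d. with law $\p$, so are the $Y_k$, and they satisfy $\E[Y_k] = \inprod{\p, v}$ together with $Y_k \in [\mn{v}, \mx{v}]$. Hence the event $\{\p \in \ambca{\D}{v}\}$ is exactly the event that $\E[Y_1] - \tfrac{1}{\ss}\sum_k Y_k \leq \rr{\ss}{\conf}\,\rg(v)$, i.e., a one-sided deviation of the empirical mean from its expectation by at most $\rr{\ss}{\conf}\,\rg(v)$.

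Next, I would apply Hoeffding's inequality (in the one-sided form, applied to $-Y_k$) to the bounded i.i.d. variables $Y_k \in [\mn{v}, \mx{v}]$, which yields
\[
 \prob\!\left[\E[Y_1] - \tfrac{1}{\ss}\tsum_{k=1}^{\ss} Y_k > t\,\rg(v) \right]
 \,\leq\, \exp\!\bigl(-2\ss t^2\bigr),
 \qquad t \geq 0.
\]
Setting $t = \sqrt{\log(1/\conf)/(2\ss)}$ makes the right-hand side equal to $\conf$, which matches the second branch in the definition \eqref{eq:radius} of $\rr{\ss}{\conf}$.

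Finally, I would note that the first branch in the $\min$ defining $\rr{\ss}{\conf}$, namely the value $1$, gives a deterministically valid bound: for any $p \in \simplex_{\nModes}$, one has $\inprod{p, v} \in [\mn{v}, \mx{v}]$, so $\inprod{\p - \ph[\D], v} \leq \mx{v} - \mn{v} = \rg(v)$ with probability one. Thus taking the minimum of the two bounds preserves the coverage guarantee in all regimes (in particular when $\sqrt{\log(1/\conf)/(2\ss)} > 1$, in which case $\ambca{\D}{v} = \simplex_{\nModes}$ trivially).

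I do not anticipate any serious obstacle: the main subtlety is merely recognizing that the coverage event has the clean linear form $\E[Y_1] - \bar Y_\ss \leq t\,\rg(v)$ and that the bounding interval $[\mn{v}, \mx{v}]$ has width $\rg(v)$, so that Hoeffding's inequality applies directly with no dependence on $v$ beyond its range. The remainder is just solving the exponential tail bound for $t$ and pairing it with the trivial bound $1$.
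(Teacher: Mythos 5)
Your proof is correct and follows essentially the same route as the paper's: both reduce the coverage event to a one-sided Hoeffding bound on the i.i.d.\ bounded variables $v_{\xi_k}$ (the paper works with the centered differences $y_k=\inprod{\p-\e_{\xi_k},v}$, which is the same thing up to a sign and a deterministic shift), solve the exponential tail for the radius, and dispose of the $\rr{\ss}{\conf}=1$ branch by the deterministic bound $\inprod{\p-\ph[\D],v}\leq\rg(v)$. Your explicit observation that the $Y_k$ lie in an interval of width $\rg(v)$ is in fact the cleaner justification for the exponent $-2\ss t^2$.
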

\begin{proof}
    Define 
    \(
        y_k \dfn \langle p - \e_{\xi_k}, v \rangle, 
    \)
    so that \( \tfrac{1}{\ss} \tsum_{k=1}^{\ss}y_k = \inprod{p - \ph_\ss, v} \). 
    Since $v$ is fixed, $y_k, k \in [\ss]$ are i.i.d., and we have 
    $\E[y_k] = 0$ and 
    \ifArxiv
    (by \cref{lem:aux-upper-bound}),  
    \else
    (by \cite[Lem. A.1]{cdc2023_arxiv}),  
    \fi
    $|y_k| \leq \rg(v)$, $\forall k \in \N$.
    This establishes the (vacuous) case $\rr{\ss}{\conf} = 1$ in \eqref{eq:radius}. 
    For the nontrivial case,
    we apply
    Hoeffding's inequality
    \cite[eq. 2.11]{wainwright_HighdimensionalStatisticsNonasymptotic_2019} 
    \begin{equation} \label{eq:confidence-proof}
    \prob \big[
        \tfrac{1}{\ss} \tsum_{k=1}^{\ss} y_k > t 
    \big] \leq \exp\left( \tfrac{-2 m t^2}{\rg(v)^2} \right).   
    \end{equation}
    Setting $t = \rr{\ss}{\conf} \rg(v)$,
    equating the right-hand side of \eqref{eq:confidence-proof} to the desired confidence level $\conf$,
    and solving for $\rr{\ss}{\conf}$ yields the desired result.
\end{proof}
}{
    where $\mbd: \Xi^{\ss} \times \Re^{\nModes} \ni (\D, v) \mapsto \mbdca{\D}{v}
    \in \Re$ 
is a data-driven estimator for $\inprod{\p, v}$, 
selected to satisfy the following assumption, 
which implies that \eqref{eq:inclusion} holds 
for $\amb_\ss = \ambca{\D}{v}$.
\begin{assumption} \label{asm:mean-bound} 
\( 
\prob[ \inprod{\p, v} \leq \mbdca{\D}{v}  ] \geq 1 - \conf, 
    \; \forall v \in \Re^{\nModes}. 
\)
\end{assumption}
Note that the task of selecting $\mbd$ to satisfy \cref{asm:mean-bound}
is equivalent to finding 
a high-confidence upper bound on the mean of the scalar random variable
$\inprod{v, \e_{\xi}}$, $\xi \sim \p$.
It is straightforward to derive 
such bounds by bounding the 
deviation of a random variable from its empirical mean
using classical concentration inequalities like Hoeffding's inequality
\ifHoeffding
. 
\begin{proposition}[Hoeffding bound] \label{prop:hoeffding}
    Fix $v \in \Re^{\nModes}$ and let  
    $\D$ with $|\D| = \ss$, be an i.i.d. sample from $\p \in \simplex_\nModes$,
    with empirical distribution 
    \(\ph[\D] = \tfrac{1}{\ss} \tsum_{\xi \in \D} \e_{\xi}\).
    Consider the bound
    \begin{equation} \label{eq:hoeffding-bound} 
        \mbdca{\D}{v} = \inprod{v, \ph[\D]} + \rr{\ss}{\conf} \rg(v). 
    \end{equation} 
    This bound satisfies \cref{asm:mean-bound}, if $\rr{\ss}{\conf}$ satisfies
    \begin{equation}\label{eq:radius}
        \rr{\ss}{\conf} = \min\big\{1, \sqrt{\tfrac{\log (\nicefrac{1}{\conf})}{2\ss}}\big\}. 
    \end{equation}
\end{proposition}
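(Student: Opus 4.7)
The plan is to reduce the claim to a one-sided concentration bound for the empirical mean of the i.i.d. scalar random variables $y_k \dfn \inprod{v, \e_{\xi_k}}$, $k \in [\ss]$. Since $\xi_1, \dots, \xi_\ss$ are i.i.d. from $\p$, so are the $y_k$, with $\E[y_k] = \inprod{v, \p}$ and empirical mean $\tfrac{1}{\ss}\tsum_{k=1}^{\ss} y_k = \inprod{v, \ph[\D]}$. Verifying \cref{asm:mean-bound} for the bound \eqref{eq:hoeffding-bound} is therefore equivalent to showing
\[
\prob\big[\tfrac{1}{\ss}\tsum_{k=1}^{\ss}(\E[y_k] - y_k) > \rr{\ss}{\conf}\rg(v)\big] \leq \conf.
\]

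First, I would note that $y_k \in [\mn{v}, \mx{v}]$ almost surely, since $\e_{\xi_k}$ is a standard basis vector. This immediately disposes of the saturated regime $\rr{\ss}{\conf} = 1$: for every $p \in \simplex_{\nModes}$ one has $\inprod{v, p} \in [\mn{v}, \mx{v}]$, hence $\inprod{v, \p} - \inprod{v, \ph[\D]} \leq \rg(v)$ deterministically, and \cref{asm:mean-bound} holds with probability one in this case.

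For the nontrivial regime $\rr{\ss}{\conf} = \sqrt{\log(\nicefrac{1}{\conf})/(2\ss)} < 1$, I would apply Hoeffding's inequality to the bounded i.i.d. variables $-y_k$, yielding
\[
\prob\big[\inprod{v,\p} - \inprod{v,\ph[\D]} > t\big] \leq \exp\!\big(\!-\tfrac{2\ss t^2}{\rg(v)^2}\big).
\]
Setting $t = \rr{\ss}{\conf}\rg(v)$, equating the right-hand side to $\conf$, and solving for $\rr{\ss}{\conf}$ produces exactly \eqref{eq:radius}. Combined with the trivial case, this establishes \cref{asm:mean-bound} for the bound \eqref{eq:hoeffding-bound}.

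I do not anticipate any substantive obstacle: the argument is a textbook application of Hoeffding's inequality, essentially identical to the proof sketched in the previous version of the excerpt (with $y_k \dfn \inprod{v, \e_{\xi_k}}$ replacing $\inprod{p - \e_{\xi_k}, v}$; the re-centering only flips the sign of the deviation and does not affect the tail bound). The only minor point worth making explicit is why one must cap $\rr{\ss}{\conf}$ at $1$, namely to cover the small-sample or large-confidence regime where Hoeffding alone would give a vacuous tail bound exceeding the deterministic range of $\inprod{v,\cdot}$ over the simplex.
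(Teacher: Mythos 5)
Your proof is correct and follows essentially the same route as the paper's: reduce \cref{asm:mean-bound} to a one-sided Hoeffding tail bound for the i.i.d.\ scalars $\inprod{v,\e_{\xi_k}}$, dispose of the capped case $\rr{\ss}{\conf}=1$ via the deterministic range bound over the simplex, and solve $\exp(-2\ss t^2/\rg(v)^2)=\conf$ for $t = \rr{\ss}{\conf}\rg(v)$. The only cosmetic difference is that the paper centers the variables as $y_k = \inprod{\p-\e_{\xi_k},v}$ and invokes an auxiliary lemma for $|y_k|\leq\rg(v)$, whereas you work with the uncentered $v_{\xi_k}\in[\mn{v},\mx{v}]$ directly, which is equivalent.
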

\begin{proof}
    Define 
    \(
        y_k \dfn \langle \p - \e_{\xi_k}, v \rangle, 
    \)
    so that \( \tfrac{1}{\ss} \tsum_{k=1}^{\ss}y_k = \inprod{\p - \ph_\ss, v} \). 
    Since $v$ is fixed, $y_k, k \in [\ss]$ are i.i.d., and we have 
    $\E[y_k] = 0$ and 
    \ifArxiv
    (by \cref{lem:aux-upper-bound}),  
    \else
    (by \cite[Lem. A.1]{cdc2023_arxiv}),  
    \fi
    $|y_k| \leq \rg(v)$, $\forall k \in \N$.
    This establishes the (vacuous) case $\rr{\ss}{\conf} = 1$ in \eqref{eq:radius}. 
    For the nontrivial case,
    we apply
    Hoeffding's inequality
    \cite[eq. 2.11]{wainwright_HighdimensionalStatisticsNonasymptotic_2019} 
    \begin{equation} \label{eq:confidence-proof}
    \prob \big[
        \tfrac{1}{\ss} \tsum_{k=1}^{\ss} y_k > t 
    \big] \leq \exp\left( \tfrac{-2 m t^2}{\rg(v)^2} \right).   
    \end{equation}
    Setting $t = \rr{\ss}{\conf} \rg(v)$,
    equating the right-hand side of \eqref{eq:confidence-proof} to the desired confidence level $\conf$,
    and solving for $\rr{\ss}{\conf}$ yields the desired result.
\end{proof}
\else
\cite[Prop. III.2]{cdc2023_arxiv}.
\fi 
Although attractive for its simplicity, this type of bounds has the 
drawback that it applies a constant offset (depending only on the 
sample \textit{size}, not the data) to the empirical mean, 
which may be conservative, especially for small samples.
Considerably sharper bounds 
can be obtained through a more direct approach. 
In particular, we will focus our attention on the following result
due to Anderson \cite{anderson_ConfidenceLimitsExpected_1969}, 
which is a special case of the framework presented in 
\cite{coppens_RobustifiedEmpiricalRisk_2023}. 
\ifHoeffding
\ifArxiv
We provide an experimental comparison between the bounds in 
Appendix~\ref{sec:comparison-hoeffding}.
\else
    \rev{}{See \cite[App. B]{cdc2023_arxiv} for a numerical comparison 
    with the classical Hoeffding bound.}
\fi
\fi 

\begin{proposition}[{Ordered mean bound \cite{coppens_RobustifiedEmpiricalRisk_2023}}]
    \label{prop:orm}
	Let $\vi_{k} \dfn \inprod{v, \e_{\xi_k}}$, $k \in [\ss]$, 
    so that $\E[\vi_{k}] = \inprod{v, \p}$.
	Let \(\ord{\vi}{1}
		\leq
		\ord{\vi}{2}
        \leq \dots 
        \leq \ord{\vi}{\ss}
        \leq \vmx\) denote the sorted sequence, with ties broken arbitrarily, 
    where $\vmx \dfn \max_{i \in [\nModes]} v_i$.
	Then, there exists a $\gamma \in (0,1)$ such that 
    \cref{asm:mean-bound} holds for
    \begin{equation} \label{eq:anderson}
		\mbdca{\D}{v} =
		\big(
            \tfrac{\kthr}{\ss} - \gamma
		\big)
        \ord{\vi}{\kthr}
    + \tsum_{i = \kthr+1}^{\ss} \tfrac{\ord{\vi}{i}}{\ss} + \gamma \vmx,
    \; \kthr = \ceil{\ss \gamma}.
	\end{equation}
\end{proposition}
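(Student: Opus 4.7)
The plan is to follow Anderson's classical route: combine a uniform one-sided concentration inequality for the empirical CDF of the scalarized samples $\vi_k = \inprod{v, \e_{\xi_k}}$ with the tail-integral representation of the mean of a bounded random variable, and then evaluate the resulting bound explicitly in terms of the order statistics. Write $F$ for the common CDF of the $\vi_k$'s and $F_\ss$ for its empirical counterpart based on $\D$.

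The first step is to invoke the one-sided Dvoretzky--Kiefer--Wolfowitz (DKW) inequality,
\[
    \prob\bigl[\sup_{t \in \Re} (F_\ss(t) - F(t)) > \gamma \bigr] \leq e^{-2\ss\gamma^2},
\]
and pick $\gamma = \min\{1, \sqrt{\log(\nicefrac{1}{\conf})/(2\ss)}\} \in (0,1)$. This guarantees that, with probability at least $1 - \conf$, $F(t) \geq (F_\ss(t) - \gamma)_+$ for every $t \in \Re$ (using also $F \geq 0$). Since $\vi_1 \leq \vmx$ almost surely, the standard identity $\E[\vi_1] = \vmx - \int_{-\infty}^{\vmx} F(t)\, dt$ then yields, on this high-probability event,
\[ \inprod{\p, v} = \E[\vi_1] \leq \vmx - \int_{-\infty}^{\vmx} (F_\ss(t) - \gamma)_+ \, dt. \]
The remaining task is purely deterministic: show that the right-hand side equals the expression in \eqref{eq:anderson}.

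To evaluate the integral, I would exploit the staircase structure of $F_\ss$, which takes the constant value $k/\ss$ on $[\ord{\vi}{k}, \ord{\vi}{k+1})$. Thus $(F_\ss - \gamma)_+$ vanishes on $(-\infty, \ord{\vi}{\kthr})$ with $\kthr = \ceil{\ss\gamma}$, equals $k/\ss - \gamma$ on each $[\ord{\vi}{k}, \ord{\vi}{k+1})$ for $k = \kthr, \dots, \ss-1$, and equals $1 - \gamma$ on $[\ord{\vi}{\ss}, \vmx]$. A summation-by-parts then collapses this piecewise-constant integral into a weighted sum of the order statistics that, after cancellation, reduces precisely to \eqref{eq:anderson}. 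The main technical obstacle is really just this last bookkeeping step: one needs to track carefully how the partial leading interval---on which $F_\ss$ first crosses $\gamma$---produces the isolated coefficient $\kthr/\ss - \gamma$ on $\ord{\vi}{\kthr}$, and how the terminal interval $[\ord{\vi}{\ss}, \vmx]$ gives rise to the $\gamma \vmx$ term. The remaining ingredients (DKW and the CDF representation of the mean of a bounded random variable) are entirely classical.
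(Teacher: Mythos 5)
The paper does not prove \cref{prop:orm} itself---it imports the statement from Anderson and from Coppens et al.---so there is no internal proof to compare against; your argument is the classical Anderson construction underlying those references, and it is correct. I verified the bookkeeping you deferred: on the one-sided DKW event, $F\geq (F_\ss-\gamma)_+$ everywhere, $(F_\ss-\gamma)_+$ vanishes to the left of $\ord{\vi}{\kthr}$ (because $(\kthr-1)/\ss<\gamma\leq \kthr/\ss$), equals $k/\ss-\gamma$ on $[\ord{\vi}{k},\ord{\vi}{k+1})$ for $k\geq\kthr$ and $1-\gamma$ on $[\ord{\vi}{\ss},\vmx]$, and summation by parts gives
\[
\vmx-\int_{-\infty}^{\vmx}\big(F_\ss(t)-\gamma\big)_+\,dt
=\big(\tfrac{\kthr}{\ss}-\gamma\big)\ord{\vi}{\kthr}
+\tsum_{i=\kthr+1}^{\ss}\tfrac{\ord{\vi}{i}}{\ss}+\gamma\vmx,
\]
which is exactly \eqref{eq:anderson}. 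Two remarks. First, your choice $\gamma=\min\{1,\sqrt{\log(\nicefrac{1}{\conf})/(2\ss)}\}$ lies in the open interval $(0,1)$ demanded by the statement only when $\log(\nicefrac{1}{\conf})<2\ss$; in the opposite (pathological) regime the DKW bound at level $\conf$ would force $\gamma\geq 1$ and your argument does not yield an admissible $\gamma$, so you should either assume $\conf>e^{-2\ss}$ or dispose of that case separately. Second, your route and the paper's differ in how $\gamma$ is calibrated: the paper treats $\gamma=\sqrt{\log(\nicefrac{1}{\conf})/(2\ss)}$ (its \eqref{eq:gam-asymptotic}) as valid only for sufficiently large $\ss$ and otherwise obtains a smaller $\gamma$ from the exact distribution of the one-sided Kolmogorov--Smirnov statistic via root-finding, whereas the one-sided DKW inequality with Massart's constant makes your explicit $\gamma$ valid at every finite $\ss$, at the price of being more conservative than the exact-distribution choice. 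This buys a fully self-contained, elementary proof of existence of $\gamma$, but it would slightly weaken the finite-sample bound actually used in the experiments.
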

For finite $\ss$, the smallest value of $\gamma$ 
ensuring that \cref{prop:orm} holds, can be computed 
efficiently by solving a scalar root-finding problem 
\cite[Rem. IV 3]{coppens_RobustifiedEmpiricalRisk_2023}. 
Furthermore, it can be shown that the result holds for
\cite[Thm. 11.6.2]{wilks_MathematicalStatistics_1963}
\begin{equation} \label{eq:gam-asymptotic}  
    \gamma = \sqrt{\tfrac{\log(\nicefrac{1}{\conf})}{2 \ss}},
    \text{ for sufficiently large }
    \ss.
\end{equation}
This asymptotic expression will be useful when establishing theoretical guarantees
in \Cref{sec:theory}.
}
\subsection{Selection of \texorpdfstring{$v$}{v}} \label{sec:selection-v}

The proposed ambiguity set \eqref{eq:ambiguity-shape}
depends on a vector $v$.
As discussed in \cref{sec:motivation}, 
we would ideally take $v = L(\xop)$ with $\xop \in \Xop(\p)$. 
However, since this ideal is obviously out of reach,
we instead look for suitable approximations. 
In particular, we propose to use 
the available dataset $\D$ 
in part to select $v$ to approximate $L(\xop)$, and in part to 
 calibrate the \rev{estimate $\p$ and calibrate the ambiguity set parameter 
\eqref{eq:radius}}{mean bound $\mbd$}.

To this end, we will partition the available dataset $\D$ 
into a \textit{\training} set and a \textit{\calibration} set. 
Let $\trs: \N \to \N$ 
be a user-specified function determining the size of the \emph{\training} set, 
which satisfies
\begin{subequations} \label{eq:def-tau}
    \begin{align}
        \label{eq:tau-upper-bound}
        \trs(\ss) &\leq c \ss \; \text{ for some } c \in (0,1); \text{ and }\\ 
        \label{eq:tau-limit}
        \trs(\ss) &\to \infty \; \text{ as } \ss \to \infty. 
    \end{align}
\end{subequations}
Correspondingly, let $\{ \Dt, \Do \}$ be a partition of 
$\D$, i.e., $\Dt \cap \Do = \emptyset$ and $\Dt {}\cup{} \Do = \D$. 
Given that $|\D| = \ss$, we ensure that 
$|\Dt| = \trs(\ss)$ and thus $|\Do| = \ss' \dfn \ss - \trs(\ss)$.
Note that by construction, $\ss' \geq (1-c) \ss$, with $c \in (0,1)$, 
and thus, both $|\Dt| \to \infty$ and $|\Do| \to \infty$ as $\ss \to \infty$. 
Due to the statistical independence of the elements in $\D$, 
it is inconsequential how exactly the individual data points
are divided into $\Dt$ and $\Do$. 
Therefore, without loss of generality, we may take
$\Dt = \{\xi_1, \dots, \xi_{\trs(\ss)}\}$ and
$\Do = \{\xi_{\trs(\ss) + 1}, \dots, \xi_{\ss}\}$.

With an independent dataset $\Dt$ at our disposal, 
we may use it to design a mapping $v_{\trs(\ss)}: \Xi^{\trs(\ss)} \to \Re^{\nModes}$, 
whose output will be a 
data-driven estimate of $L(\xop)$. For ease of notation, we will omit the explicit dependence 
on the data, i.e., we write $v_{\trs(\ss)}$ instead of $v_{\trs(\ss)}(\Dt)$.
We propose the following construction. 
Let $\ph_{\trs(\ss)} = \tfrac{1}{\trs(\ss)} \sum_{k=1}^{\trs(\ss)} \e_{\xi_{k}}$
denote the empirical distribution of $\Dt$ and set 
\begin{equation} \label{eq:primer}
    \begin{aligned}
      v_{\trs(\ss)} &= L(\xx_{\trs(\ss)}), \text{ with }\\
      \xx_{\trs(\ss)} &\in \argmin_{x \in X} V(x, \ph_{\trs(\ss)}). 
    \end{aligned}
\end{equation}

\begin{remark}
    We underline that although \eqref{eq:primer}
    is a natural choice, several alternatives
    for the \textit{\training} vector
    could in principle be considered.
    To guide this choice, \cref{lem:consistency-conditions}
    provides sufficient conditions on \rev{}{the combination of $\mbd$ and} $v_{\trs(\ss)}$ 
    to ensure consistency of the method.
\end{remark}

Given $v_{\trs(\ss)}$ as in \eqref{eq:primer}, 
we will from hereon use the following shorthand notation whenever convenient: 
\begin{equation} \label{eq:amb-shorthand}
    \amb_{\ss} \dfn \ambca{\Do}{v_{\trs(\ss)}},
    \quad \mbd_{\ss} \dfn \mbdca{\Do}{v_{\trs(\ss)}},
\end{equation}
with $\ambca{\Do}{v_{\trs(\ss)}}$ as in \eqref{eq:ambiguity-shape}.
We correspondingly obtain the cost estimate 
$\Vest_{\ss}$ according to \eqref{eq:dro-problem}.

\subsection{Selection of \texorpdfstring{$\trs$}{τ(m)}}
Given the conditions in \eqref{eq:def-tau}, 
there is still some flexibility in the choice of $\trs(\ss)$, 
which defines a trade-off between the quality of $v_{\trs(\ss)}$ as 
an approximator of $L(\xop)$ and the size of the ambiguity set 
$\amb_\ss$.

An obvious choice is to reserve a fixed fraction of the 
available data for the \emph{\training} set, i.e., 
set $\nicefrac{\trs(\ss)}{\ss}$ equal to some constant.
However, for low sample counts $\ss$, 
\rev{
    the parameter $r_{\ss'}$
}{
    the mean bound $\mbd_{\ss}$
}
will typically be large and
thus $\amb_{\ss}$ will not be substantially smaller than the unit simplex $\simplex_{\nModes}$, regardless of $v_{\trs(\ss)}$.
As a result, the obtained solution will also be rather insensitive to $v_{\trs(\ss)}$. 
In this regime, it is therefore preferable to reduce 
\rev{
$\rr{\ss'}{\conf}$
}{
    the conservativeness of $\mbd_{\ss}$
} 
quickly by 
using small values of $\nicefrac{\trs(\ss)}{\ss}$ (i.e., large values of $\ss' = \ss - \trs(\ss)$).

Conversely, for large sample sizes,
\rev{
$\rr{\ss'}{\conf}$ will be small,
}{
    $\mbd_{\ss}$ is typically a good approximation of 
    $\inprod{\p, v_{\trs(\ss)}}$
}
and the solution to \eqref{eq:dro-problem} 
will be more strongly biased to align with $v_{\trs(\ss)}$.
Thus, the marginal benefit of improving the quality of $v_{\trs(\ss)}$ 
takes priority over reducing \rev{
$\rr{\ss'}{\conf}$
}{
$\mbd_{\ss}$
},
and large fractions $\nicefrac{\trs(\ss)}{\ss}$ become preferable.
Based on this reasoning, we propose the heuristic
\begin{equation} \label{eq:choice-tau}
    \trs(\ss) = \floor{\mu \nu \tfrac{\ss (\ss + 1)}{ \mu \ss + \nu}}, \quad \mu,\nu \in (0, 1).
\end{equation}
Note that $\mu$ and $\nu$ are the limits of $\nicefrac{\trs(\ss)}{\ss}$ as 
$\ss\to0$ and $\ss\to \infty$, respectively. Eq.~\eqref{eq:choice-tau} then interpolates between these 
extremes, depending on the total amount of data available.
We have found $\mu=0.01, \nu=0.8$ to be suitable choices for several test problems.

\subsection{Tractable reformulation}
The proposed ambiguity set takes the form of a polytope,
and thus, standard reformulations based on conic ambiguity sets 
apply directly \cite{sopasakis_RiskaverseRiskconstrainedOptimal_2019a}.
Nevertheless, as we will now show, 
a tractable reformulation of \eqref{eq:dro-problem}
specialized to the ambiguity set \eqref{eq:ambiguity-shape}
may be obtained,
which requires fewer auxiliary variables and constraints
\rev{
than the reformulation obtained from applying
the results in \cite{sopasakis_RiskaverseRiskconstrainedOptimal_2019a} directly}{}.

\begin{proposition}[Tractable reformulation of \eqref{eq:dro-problem}] \label{prop:reform-dro}
    Fix parameters
    $\ph \in \simplex$, $v \in \Re^{\nModes}$, and \rev{
    $r > 0$}{
    $\mbd \in \Re$
    }
    and 
    let 
    \(
        \amb = \{ p \in \simplex_{\nModes} \mid 
        \rev{\inprod{p - \ph, v} \leq r \rg(v)}
        {\inprod{p, v} \leq \mbd
    }\} 
    \)
    be an ambiguity set of the form \eqref{eq:ambiguity-shape}.
    Denoting $V_{\amb} \dfn \min_{x \in X} \max_{p \in \amb} V(x,p)$, we have 
    \begin{equation} \label{eq:DRO-reform}
        \begin{aligned} 
        V_{\amb}
            = 
            \min_{\substack{
                x \in X,
                \lambda \geq 0
        }} \lambda \rev{(\inprod{\ph, v} + r \rg(v))}{\mbd} + \max_{i \in [\nModes]} \{ \ell_i(x) - \lambda v_i \}. 
        \end{aligned}
    \end{equation}

\end{proposition}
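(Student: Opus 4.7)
The plan is to derive the reformulation by applying linear programming duality to the inner maximization over $p$, and then combining the resulting minimization with the outer $\min_{x \in X}$.

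First I would fix $x \in X$ and examine the inner problem
\[
  \max_{p \in \amb} V(x, p) = \max_{p \geq 0} \; \inprod{L(x), p} \;\; \text{s.t.} \;\; \inprod{\mathbf{1}, p} = 1, \; \inprod{v, p} \leq \mbd.
\]
This is a linear program in $p$. Assuming $\amb \neq \emptyset$ (which is implicit whenever the DRO value is finite), the primal is feasible and the objective is bounded above by $\max_i \ell_i(x)$, so LP strong duality applies. Introducing a free dual variable $\mu \in \Re$ for the simplex equality and a nonnegative dual variable $\lambda \geq 0$ for the inequality $\inprod{v, p} \leq \mbd$, the dual reads
\[
  \min_{\mu \in \Re, \, \lambda \geq 0} \; \mu + \lambda \mbd \;\; \text{s.t.} \;\; \mu + \lambda v_i \geq \ell_i(x), \; \forall i \in [\nModes].
\]

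Next I would eliminate $\mu$ by noting that the dual constraint is tight at the minimum: $\mu = \max_{i \in [\nModes]} \{\ell_i(x) - \lambda v_i\}$. Substituting this back gives
\[
  \max_{p \in \amb} V(x, p) = \min_{\lambda \geq 0} \; \lambda \mbd + \max_{i \in [\nModes]} \{\ell_i(x) - \lambda v_i\},
\]
which is exactly the expression for fixed $x$ claimed in \eqref{eq:DRO-reform}. Finally, taking $\min_{x \in X}$ on both sides and interchanging the two minima (trivially allowed since both are over independent variables) yields the joint formulation over $(x, \lambda) \in X \times \Re_+$.

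The only subtlety worth checking carefully is strong duality for the inner LP, which requires feasibility of the primal; this is guaranteed whenever the ambiguity set is nonempty, which we may assume without loss of generality since otherwise the DRO value is $-\infty$ and the right-hand side of \eqref{eq:DRO-reform} would also be $-\infty$ (achieved by taking $\lambda \to \infty$ when the upper-bound constraint is infeasible). The rest of the argument is a routine application of LP duality, so I expect no significant obstacle beyond bookkeeping.
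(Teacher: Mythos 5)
Your proof is correct and takes essentially the same approach as the paper's: both apply LP strong duality to the inner maximization over $p$ and then use the identity $\max_{p \in \simplex_{\nModes}} \inprod{p, y} = \max_{i \in [\nModes]} y_i$ to arrive at \eqref{eq:DRO-reform}. The only cosmetic difference is that you form the full LP dual (introducing a multiplier $\mu$ for the simplex equality and then eliminating it), whereas the paper dualizes only the constraint $\inprod{p, v} \leq \mbd$ and keeps the maximization over the simplex explicit; your added check on feasibility of the ambiguity set is a harmless extra precaution.
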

\begin{proof}
    \rev{
    For an ambiguity set of the form \eqref{eq:ambiguity-shape}, 
    the inner maximization of the problem \eqref{eq:dro-problem} takes the form 
    }{Let}
    \( 
        g(z) \dfn \max_{p \in \simplex_{\nModes}} \{ \inprod{p, z} \mid \inprod{p, v} \leq \mbd\},
    \)
    where $z \in \Re^{\nModes}$ and 
    $\mbd \rev{= \inprod{\ph, v} + r \rg(v)}{}$ 
    are constants with respect to $p$. By strong duality of 
    \acl{LPing} \cite{ben-tal_LecturesModernConvex_2001}, 
    \begin{equation*} 
        \begin{aligned}
            g(z) &= \min_{\lambda \geq 0} \max_{p \in \simplex_{\nModes}} \inprod{p, z} - \lambda (\inprod{p, v} - \mbd) \\ 
                 &= \min_{\lambda \geq 0} \lambda \mbd + \max_{p \in \simplex_{\nModes}} \inprod{p, z - \lambda v} \\
        \end{aligned}
    \end{equation*}
    Noting that $\max_{p \in \simplex_{\nModes}} y = \max_{i \in [\nModes]} y_i,$ $\forall y\in \Re^\nModes$ and that $V_{\amb} = \min_{x \in X} g(L(x))$, we obtain \eqref{eq:DRO-reform}.
\end{proof}

If the functions $\{ \ell_i \}_{i \in [\nModes]}$ are convex, then \eqref{eq:DRO-reform}
is a convex optimization problem, which can be solved efficiently using 
off-the-shelf solvers.
In particular, if they are convex, piecewise affine functions,
then it reduces to \iac{LP}.
\ifDetails
For instance, introducing a scalar epigraph variable, 
one may further rewrite \eqref{eq:DRO-reform} as 
\begin{equation} \label{eq:reform-epigraph-relax}
    \begin{aligned}
        \min_{\substack{
            x \in X,
            \lambda \geq 0,
            z \in \Re
    }} \{ \lambda \rev{(\inprod{\ph, v} + r \rg(v))}{\mbd} + z  \mid L(x) - \lambda v \leq z\1 \}, 
    \end{aligned}
\end{equation}
which avoids the non-smoothness of the pointwise maximum in \eqref{eq:DRO-reform}
at the cost of a scalar auxiliary variable.
Even for general (possibly nonconvex) choices of $\ell_i$,
\eqref{eq:DRO-reform} is a standard nonlinear program,
which can be handled by existing solvers.
\fi

We conclude the section by summarizing the described steps in \cref{alg:cadro}. 
\begin{algorithm}
   {\small
   \caption{\cadro}
   \label{alg:cadro}
   \begin{algorithmic}
      \Require
       i.i.d. dataset $\D = \{\xi_1, \dots, \xi_{\ss} \}$;
        $\trs(\ss)$ (cf. \eqref{eq:def-tau}); Confidence parameter $\beta \in (0,1)$. 
      \Ensure $(\Vest_\ss, \xest_\ss)$ satisfy \eqref{eq:guarantee-coverage}--\eqref{eq:guarantee-consistency} \Comment{Cf. \cref{sec:theory}}
      \State $\Dt \gets \{\xi_1, \dots, \xi_{\trs(\ss)}\}, \;  \Do \gets \{\xi_{\trs(\ss)+1}, \dots, \xi_{\ss}\}$
      \State $v_{\trs(\ss)} \gets$ evaluate \eqref{eq:primer}
      \State
      $(\Vest_\ss, \xest_{\ss}) \gets$ solve \eqref{eq:dro-problem} with $\amb_\ss = \ambca{\Do}{v_{\trs(\ss)}}$ \Comment{Use \eqref{eq:DRO-reform}}
   \end{algorithmic}
   }
\end{algorithm}

\section{Theoretical Properties} \label{sec:theory}
We will now show that the proposed scheme possesses the
required theoretical properties, namely to provide
\begin{inlinelist*}
	\item an upper bound to the out-of-sample cost, with high probability (cf. \eqref{eq:guarantee-coverage})
	\item a consistent estimate of the true optimal cost (cf. \eqref{eq:guarantee-consistency}).
\end{inlinelist*}
\ifArxiv
	Let us start with the first guarantee, which follows almost directly by construction.
\else
	\rev{
		Let us start with the first guarantee, which follows almost directly by construction.
	}{
		The first guarantee follows almost directly by construction,
		and its proof is therefore omitted here. See \cite{cdc2023_arxiv}
		for more details.
	}
\fi

\begin{thm}[Out-of-sample guarantee] \label{thm:out-of-sample-cadro}
Fix $\ss > 0$, and 
let $\Vest_\ss$, $\xest_\ss$ be generated by \cref{alg:cadro}. Then, 
\begin{equation}
    \prob[ V(\xest_{\ss}, \p) \leq \Vest_{\ss} ] \geq 1 - \conf. 
\end{equation}
\end{thm}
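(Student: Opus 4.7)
The plan is to derive the guarantee by essentially unwinding the construction of Algorithm~\ref{alg:cadro}, exploiting the statistical independence between the training and calibration subsamples $\Dt$ and $\Do$.

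First, I would observe that once $v_{\trs(\ss)}$ is fixed, the event $\{\p \in \amb_\ss\}$ coincides with the event $\{\inprod{\p, v_{\trs(\ss)}} \leq \mbdca{\Do}{v_{\trs(\ss)}}\}$, since the only non-trivial constraint defining $\ambca{\Do}{v_{\trs(\ss)}}$ in \eqref{eq:ambiguity-shape} is the scalar inequality on $\inprod{p, v_{\trs(\ss)}}$ (the simplex constraint is automatically satisfied by $\p$). Hence it suffices to show that the latter event occurs with probability at least $1 - \conf$.

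Next, I would condition on $\Dt$. Since $\Dt$ and $\Do$ are disjoint subsets of the i.i.d. sample $\D$, they are statistically independent. Therefore, conditionally on $\Dt$, the vector $v_{\trs(\ss)}$ is a deterministic quantity, and $\Do$ remains an i.i.d. sample of size $\ss' = \ss - \trs(\ss)$ from $\p$. Consequently, Assumption~\ref{asm:mean-bound} applies directly to the scalar mean bound $\mbdca{\Do}{v_{\trs(\ss)}}$ with the (conditionally fixed) vector $v = v_{\trs(\ss)}$, yielding
\[
    \prob\big[\inprod{\p, v_{\trs(\ss)}} \leq \mbdca{\Do}{v_{\trs(\ss)}} \,\big|\, \Dt\big] \geq 1 - \conf.
\]
Marginalizing over $\Dt$ via the tower property gives the unconditional bound $\prob[\p \in \amb_\ss] \geq 1-\conf$.

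Finally, on the event $\{\p \in \amb_\ss\}$, the definition of $\Vest_\ss$ as the min-max in \eqref{eq:dro-problem} and the fact that $\xest_\ss$ attains this minimum give
\[
    V(\xest_\ss, \p) \leq \max_{p \in \amb_\ss} V(\xest_\ss, p) = \Vest_\ss,
\]
so $\prob[V(\xest_\ss, \p) \leq \Vest_\ss] \geq \prob[\p \in \amb_\ss] \geq 1 - \conf$, completing the proof. The only real subtlety is the independence argument justifying the conditional use of Assumption~\ref{asm:mean-bound}; the rest is bookkeeping. No regularity hypothesis from Assumption~\ref{asm:regularity} is needed here, since the bound is purely probabilistic and does not require attainment of the inner maximum or any continuity of the cost.
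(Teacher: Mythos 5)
Your proof is correct and follows essentially the same route as the paper's: show that the event $\{\p \in \amb_\ss\}$ implies $V(\xest_\ss,\p) \leq \Vest_\ss$ via the min-max definition, then invoke Assumption~\ref{asm:mean-bound} using the independence of $\Dt$ and $\Do$. Your explicit conditioning on $\Dt$ and appeal to the tower property merely spells out the step the paper compresses into the remark that $v_{\trs(\ss)}$ is constructed independently of $\Do$.
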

\ifArxiv
\begin{proof}
    If $\p \in \amb_{\ss}$, then
    \begin{equation} \label{eq:proof-conditional-bound}
        \Vmx(x) \dfn \max_{p \in \amb_{\ss}} V(x,p) \geq V(x, \p),\; \forall x \in X . 
    \end{equation}
    Since $\xest_{\ss} \in \argmin_{x \in X} \Vmx(x)$, 
    \rev{we therefore have that}
    {\eqref{eq:proof-conditional-bound} implies that}
    $V(\xest_\ss, \p) \leq \rev{\bar{V}(\xx)}{\Vmx(\xest_\ss)} = \Vest_{\ss}$, 
    \rev{}{where the last equality holds by definition \eqref{eq:dro-problem}.}
    Consequently, 
    \rev{}{$\p \in \amb_{\ss} \implies V(\xest_\ss, \p) \leq \Vest_\ss$}, 
    and thus 
    \(
        \prob[V(\xest_\ss, \p) \leq \Vest_{\ss}] \geq \prob[\p \in \amb_{\ss}]. 
    \)
    Since $v_{\trs(\ss)}$ is constructed independently from $\Do$, 
    \rev{\cref{prop:coverage}}{\cref{asm:mean-bound} ensures that \eqref{eq:inclusion}}
    holds with respect to $\amb_{\ss}=\ambca{\Do}{v_{\trs(\ss)}}$, 
    establishing the claim. 
\end{proof}
\fi

We now turn our attention to the matter of consistency.
That is, we will show that under suitable conditions on
the \rev{}{mean bound $\mbd$ and} the \emph{\training} vector $v$ in \eqref{eq:ambiguity-shape},
$\Vest_{\ss}$ converges almost surely to the
true optimal value, as the sample size $\ss$ grows to infinity.
We will then conclude the section by demonstrating that for the
\rev{
	\emph{\training{}} vector \eqref{eq:primer},
	proposed in \cref{sec:selection-v},
}{
	choices proposed in \cref{sec:coverage,sec:selection-v},
}
the aforementioned conditions hold.

\begin{lem}[consistency conditions] \label{lem:consistency-conditions}
        Let $\Dt$, $\Do$ be two 
        independent samples from $\p$, with sizes 
        $|\Dt| = \trs(\ss)$ and $|\Do| = \ss' \dfn \ss - \trs(\ss)$.
        Let $\ph_{\ss'} \dfn \rev{
        \ph[\Do]}{
            \tfrac{1}{\ss'} \sum_{\xi \in \Do} \e_\xi
        }$ denote the empirical distribution of
        the \calibration{} set $\Do$\rev{, as in \eqref{eq:empirical}}{}.
        \rev{Let $\rr{\ss'}{\conf}$ be given by \eqref{eq:radius}.}{}
        If $v_{\trs(\ss)} = L(\xx_{\trs(\ss)})$, with
        \newcommand{\fullmbd}{\mbdca{\Do}{v_{\trs(\ss)}}}
        $\xx_{\trs(\ss)}$, \rev{}{$\mbd_{\ss} = \fullmbd$} chosen to ensure 
        \begin{conditions}
            \rev{}{\item \label{cond:lower-bound} 
            $\inprod{\ph[\ss'], v_{\trs(\ss)}} \leq \fullmbd,\;\as;$}
            \item \label{cond:limit} \rev{\(
                \inprod{ \ph_{\ss'}, v_{\trs(\ss)} } \to \Vop(\p), \quad \as; 
            \)}{
            \( 
            \limsup_{\ss \to \infty} \fullmbd \leq \Vop(\p),\; \as.
            \)
            }
            \rev{
                \item \label{cond:uniform-bound} 
                    \( \nicefrac{\nrm{v_{\trs(\ss)}}_{\infty}}{\sqrt{\ss}} 
                    \to 0, \quad \as.
                    \)
            }{}
        \end{conditions}
        Then 
        \(
            \Vest_\ss \to \Vop(\p), \; \as, 
        \)
        where $\Vest_\ss$ is given by \eqref{eq:dro-problem}.
\end{lem}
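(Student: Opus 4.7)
The plan is to sandwich $\Vest_\ss$ between two quantities that both converge to $\Vop(\p)$, yielding the claimed almost-sure convergence. The upper bound should be essentially immediate from the construction of $\amb_\ss$, while the lower bound will rely on condition (i) and a parametric-stability argument.

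For the upper bound, I would evaluate the minimax problem at $x = \xx_{\trs(\ss)}$, which is exactly the point whose cost vector defines $v_{\trs(\ss)}$. Since $V(\xx_{\trs(\ss)}, p) = \inprod{p, L(\xx_{\trs(\ss)})} = \inprod{p, v_{\trs(\ss)}}$, the constraint $\inprod{p, v_{\trs(\ss)}} \leq \mbd_\ss$ defining $\amb_\ss$ gives $\max_{p \in \amb_\ss} V(\xx_{\trs(\ss)}, p) \leq \mbd_\ss$. Hence $\Vest_\ss \leq \mbd_\ss$, and taking limsup, condition (ii) yields $\limsup_{\ss \to \infty} \Vest_\ss \leq \Vop(\p)$ almost surely.

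For the lower bound, the key observation is that condition (i) forces the empirical distribution $\ph[\ss']$ of the calibration set to lie inside $\amb_\ss$ almost surely. Therefore, for every $x \in X$, $\max_{p \in \amb_\ss} V(x, p) \geq V(x, \ph[\ss'])$, and minimizing over $x$ gives $\Vest_\ss \geq \Vop(\ph[\ss'])$. Since $\ss' = \ss - \trs(\ss) \geq (1-c)\ss \to \infty$, the strong law of large numbers yields $\ph[\ss'] \to \p$ almost surely. Invoking continuity of the parametric optimal-value function $\Vop$ at $\p$---which follows from \cref{asm:regularity} via a standard parametric stability result---gives $\Vop(\ph[\ss']) \to \Vop(\p)$ almost surely, and thus $\liminf_{\ss \to \infty} \Vest_\ss \geq \Vop(\p)$.

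Combining the two bounds produces $\Vest_\ss \to \Vop(\p)$ almost surely. The main obstacle is really the continuity of $p \mapsto \Vop(p)$ on $\simplex_\nModes$: for this, I would appeal to the parametric stability lemma alluded to in the excerpt, whose hypotheses (continuity of each $\ell_i$ on $X$ and level-boundedness of $\ellb_i$) are precisely \cref{asm:regularity} and guarantee continuity of $\Vop$ via \cite[Thm.~1.17 and Thm.~7.41]{rockafellar_VariationalAnalysis_1998}. Everything else reduces to elementary minimax manipulations together with the SLLN and the two stated conditions on $\mbd_\ss$.
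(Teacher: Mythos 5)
Your proposal is correct and follows essentially the same route as the paper: sandwiching $\Vest_\ss$ between $\Vop(\ph_{\ss'})$ (via condition (i), the law of large numbers, and parametric stability of $\Vop$) and $\mbd_\ss$ (via condition (ii)). The only cosmetic difference is that you obtain the upper bound by evaluating the minimax directly at $x=\xx_{\trs(\ss)}$, whereas the paper invokes a general upper-bound lemma and minimizes $\mbd_\ss+\nrm{L(x)-v_{\trs(\ss)}}_\infty$ over $x$, which reduces to the same thing at that point.
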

\begin{proof}
    Let $\Vmx(x) \dfn \max_{p \in \amb_\ss} \inprod{p, L(x)}$.
    \rev{
    Since $\rr{\ss}{\conf} > 0$ and $\rg(v) \geq 0$ by definition, 
    }{}
    It is clear from
    \rev{}{\cref{cond:lower-bound} and}
    \eqref{eq:ambiguity-shape} that
    $\ph_{m'} \in \amb_\ss$.
    Let us furthermore define 
    \( \err_{\ss}(x) = L(x) - L(\xx_{\trs(\ss)}) \). 
    Then, by
    \ifArxiv
        \cref{lem:upper-bound-dro-cost}
    \else
        \cite[Lem. A.2]{cdc2023_arxiv}
    \fi
    , we have for all $x \in X$, 
    \(
          \inprod{\ph_{\ss'}, L(x)} 
              \leq
          \Vmx(x)\rev{, \text{ and }\\               
          \Vmx(x)
            \leq
        \inprod{\ph_{\ss'}, v_{\trs(\ss)}}+ 2\rr{\ss'}{\conf} \nrm{v_{\trs(\ss)}}_{\infty}\!+\nrm{\err_\ss(x)}_{\infty}.}{
        {}\leq{} \mbd_{\ss} + \nrm{\err_{\ss}(x)}_{\infty}.
    }\) 
    Minimizing with respect to $x$ yields that for all $\ss$, 
    \begin{equation} \label{eq:proof-consistency-upper-lower}
        \Verm_{\ss'} 
            \leq 
        \Vest_\ss 
            \leq
            \rev{
        \inprod{\ph_{\ss'}, L(\xx_{\trs(\ss)})} + 2\rr{\ss'}{\conf} \nrm{v_{\trs(\ss)}}_{\infty},
    }{
        \mbd_{\ss},
    }
    \end{equation}
    where $\Verm_{\ss'} \dfn \Vop(\ph_{\ss'})$ (cf. \eqref{eq:parametric}).
    \rev{
        Due to \cref{cond:limit}, the first term in the right-hand side of
    \eqref{eq:proof-consistency-upper-lower} converges to $\Vop(\p)$. 
    As for the second term,
    \eqref{eq:def-tau} ensures that $\gamma' \ss \leq  \ss' \leq \ss$,
    for some constant $\gamma' = 1 - \gamma \in (0,1)$, and thus, 
    by \eqref{eq:radius},
    $\rr{\ss'}{\conf} \rev{}{{}\leq{}} \nicefrac{C}{\sqrt{\ss}}$, 
    with constant 
    $C \dfn \sqrt{\nicefrac{\log(\nicefrac{1}{\beta})}{2\rev{}{\gamma'}}}$, 
    for sufficiently large $\ss$. 
    Therefore,
    \(
        \rr{\ss'}{\conf} \nrm{v_{\trs(\ss)}}_{\infty} 
        \rev{=}{{}\leq{}}
        C \nicefrac{\nrm{v_{\trs(\ss)}}_{\infty}}{\sqrt{\ss}} 
        \stackrel{\ref{cond:uniform-bound}}{\to} 0,\; \as,
    \)
    and thus 
    \(
    \rev{\lim_{\ss\to\infty}}{\limsup_{\ss\to \infty}} \Vest_{\ss} \leq \Vop(\p).
    \)
    Consider finally the lower bound $\Verm_{\ss'} = \Vop(\ph_{\ss'})$.
    }{
    }
    By the law of large numbers, $\ph_{\ss'} \to \p,\,\as.$ 
    Furthermore, under \cref{asm:regularity},
    \ifArxiv
        \cref{lem:parametric-stability}
    \else
        \cite[Lem. A.4]{cdc2023_arxiv}
    \fi
    states that the optimal value mapping $\Vop(p)$ 
    is continuous, which implies that also 
    $\Verm_{\ss'} \to \Vop(\p),\,\as.$ 
    \rev{
        concluding the proof.
    }{
        The claim then follows directly from \cref{cond:limit}.
    }
\end{proof}
Informally, \cref{lem:consistency-conditions} requires that
\rev{
	$v_{\trs(\ss)}$ is eventually bounded---or at least does not grow faster than
	$\nicefrac{1}{\sqrt{\ss}}$,
}{
	the mean bound is bounded from below by the empirical mean,
	and from above by a consistent estimator of the optimal cost.
}
The latter excludes choices such as the robust minimizer
$\xx_{\trs(\ss)} \in \argmin \max_{i \in [\nModes]} \ell_i(x)$
in the construction of $v_{\trs(\ss)}$.
However, besides \eqref{eq:primer}, one could consider
alternatives, such as
a separate \ac{DRO} scheme to select $v_{\trs(\ss)}$.
A more extensive study of such alternatives,
however, is left for future work.
We now conclude the section by showing that \cref{eq:primer}
\ifHoeffding
\else
	\rev{}{
		and the mean bound given by \cref{prop:orm}
	}
\fi
satisfy the requirements of \cref{lem:consistency-conditions}.
\begin{thm}[Consistency -- Ordered mean bound] \label{thm:consistency}
   Let $\Vest_\ss$ be generated by \cref{alg:cadro}, for $\ss > 0$. 
   \rev{
   If $v_{\trs(\ss)}$ is chosen according to \eqref{eq:primer},
   }{
   If $\mbd_{\ss} = \mbdca{\Do}{v_{\trs(\ss)}}$ is selected
    according to \cref{prop:orm}, with $v_{\trs(\ss)}$ as in \eqref{eq:primer},
   }
   then, 
   \(
       \Vest_\ss \to \Vop(\p), \; \as. 
   \)
\end{thm}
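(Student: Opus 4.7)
By \cref{lem:consistency-conditions}, it suffices to verify that the two conditions \ref{cond:lower-bound} and \ref{cond:limit} hold for $\mbd_{\ss} = \mbdca{\Do}{v_{\trs(\ss)}}$ as given by the Anderson formula \eqref{eq:anderson} applied to the scalars $\vi_{k} = \inprod{v_{\trs(\ss)}, \e_{\xi_k}}$ for $\xi_k \in \Do$. Throughout, I exploit that $\Dt$ and $\Do$ are independent by construction, so $v_{\trs(\ss)}$ is independent of the order statistics $\ord{\vi}{i}$.

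\textbf{Verifying \ref{cond:lower-bound}.} Writing $\ph[\Do] = \frac{1}{\ss'}\sum_{i=1}^{\ss'} \e_{\xi_{\trs(\ss)+i}}$, we have $\inprod{\ph[\Do], v_{\trs(\ss)}} = \frac{1}{\ss'}\sum_{i=1}^{\ss'} \ord{\vi}{i}$. A direct algebraic rearrangement of \eqref{eq:anderson} then yields
\[
    \mbd_\ss - \inprod{\ph[\Do], v_{\trs(\ss)}}
    = \tfrac{1}{\ss'}\tsum_{i=1}^{\kthr}\bigl(\ord{\vi}{\kthr} - \ord{\vi}{i}\bigr)
      + \gamma\bigl(\vmx - \ord{\vi}{\kthr}\bigr),
\]
and both terms are nonnegative since $\ord{\vi}{i} \leq \ord{\vi}{\kthr} \leq \vmx$ by definition of the order statistics. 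Hence \ref{cond:lower-bound} holds deterministically.

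\textbf{Verifying \ref{cond:limit}.} From the same identity and $\kthr = \ceil{\ss'\gamma} \leq \ss'\gamma + 1$, I obtain the bound
\[
    \mbd_\ss \leq \inprod{\ph[\Do], v_{\trs(\ss)}} + \bigl(2\gamma + \tfrac{1}{\ss'}\bigr) \rg(v_{\trs(\ss)}).
\]
By \eqref{eq:def-tau}, $\ss' \geq (1-c)\ss \to \infty$, and by \eqref{eq:gam-asymptotic} we may take $\gamma = \sqrt{\log(\nicefrac{1}{\conf})/(2\ss')}$ for large $\ss$, so $\gamma + 1/\ss' \to 0$. It remains to show that $\rg(v_{\trs(\ss)})$ is eventually bounded and $\inprod{\ph[\Do], v_{\trs(\ss)}} \to \Vop(\p)$ almost surely; \ref{cond:limit} then follows by taking $\limsup$.

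\textbf{Completing the argument via SAA consistency.} Since $\xx_{\trs(\ss)}$ in \eqref{eq:primer} is the SAA minimizer based on the i.i.d. sample $\Dt$ with $\trs(\ss) \to \infty$, the standard consistency result for SAA under \cref{asm:regularity} (continuity of $\ell_i$ plus level-boundedness, yielding joint continuity of $V$ and inf-compactness) gives $\dist(\xx_{\trs(\ss)}, \Xop(\p)) \to 0$ and $V(\xx_{\trs(\ss)}, \p) \to \Vop(\p)$, almost surely. In particular $\xx_{\trs(\ss)}$ is eventually contained in a compact set, so by continuity of $L$ the vector $v_{\trs(\ss)} = L(\xx_{\trs(\ss)})$ is eventually uniformly bounded, hence $\rg(v_{\trs(\ss)})$ is eventually bounded. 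Finally, independence of $\Dt$ and $\Do$ together with the LLN gives $\nrm{\p - \ph[\Do]}_1 \to 0$ a.s.; combined with boundedness of $v_{\trs(\ss)}$ this yields
\[
    \bigl|\inprod{\ph[\Do], v_{\trs(\ss)}} - V(\xx_{\trs(\ss)}, \p)\bigr| \leq \nrm{\p - \ph[\Do]}_1 \nrm{v_{\trs(\ss)}}_{\infty} \to 0 \quad \as,
\]
so $\inprod{\ph[\Do], v_{\trs(\ss)}} \to \Vop(\p)$ almost surely, establishing \ref{cond:limit} and hence the theorem. The main obstacle is the uniform control of $\rg(v_{\trs(\ss)})$; this is handled by invoking inf-compactness from \cref{asm:regularity} to confine $\xx_{\trs(\ss)}$ eventually to a compact set and then using continuity of $L$.
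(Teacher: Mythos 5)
Your proof is correct and follows essentially the same route as the paper's: reduce to \cref{lem:consistency-conditions}, verify \ref{cond:lower-bound} by the order-statistics rearrangement of \eqref{eq:anderson}, and verify \ref{cond:limit} by showing the Anderson correction term is of order $\gamma + \nicefrac{1}{\ss'}$ times a uniformly bounded range. The only substantive (but minor) difference is the last step: you establish $\inprod{\ph[\Do], v_{\trs(\ss)}} \to \Vop(\p)$ via a H\"older decomposition through $V(\xx_{\trs(\ss)}, \p)$ and standard SAA consistency, whereas the paper packages the same facts into \cref{lem:bounded-v} and \cref{cor:convergence-vp}.
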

\rev{}{
\begin{proof}
	It suffices to show that
	\cref{cond:lower-bound,cond:limit} of \cref{lem:consistency-conditions}
	are satisfied by \rev{$v_{\trs(\ss)}$}{$\mbdca{\Do}{v_{\trs(\ss)}}$}.

	\paragraph*{\Cref{cond:lower-bound}}
	Consider $\mbdca{\Do}{v}$ as in \eqref{eq:anderson} for
	an arbitrary $v \in \Re^{\nModes}$, and let
    $(\ord{\vi}{i})_{i \in [\ss']}$ denote
	$(\inprod{v, \e_{\xi}})_{\xi \in \Do}$,
	sorted in increasing order, then, we may write
    \begin{equation} \label{eq:emp-sorted}
        \inprod{\ph_{\ss'}, v} = \tfrac{1}{\ss'} \tsum_{i=1}^{\ss'} \ord{\vi}{i},
    \end{equation}
    and thus,
    \begin{equation*} 
        \begin{aligned}
            \mbdca{\Do}{v} - \inprod{\ph_{\ss'}, v}
			 & =
			\big(
			\tfrac{\kthr}{\ss'} - \gamma
			\big)
			\ord{\vi}{\kthr}
			- \tsum_{i = 1}^{\kthr} \tfrac{\ord{\vi}{i}}{\ss'} + \gamma \vmx,
			\\
			 & \labelrel{\geq}{eq:ineq-ordered}
			\big(
			\tfrac{\kthr}{\ss'} - \gamma
			\big)
			\ord{\vi}{\kthr}
			- \tfrac{\kthr}{\ss'}\ord{\vi}{k} + \gamma \vmx,
			\\
			 & = \gamma (\vmx - \ord{\vi}{k})
             \stackrel{\scriptscriptstyle(\gamma \geq 0)}{\geq} 0, 
             \quad \forall v \in \Re^{\nModes},
		\end{aligned}
	\end{equation*}
	where \eqref{eq:ineq-ordered} follows from the fact that
	$\ord{\vi}{i}$ are sorted. 
	\paragraph*{\Cref{cond:limit}}
	\ifArxiv
		By \cref{lem:bounded-v}, there
	\else There \fi
	exists a constant $\bar{v} \geq \nrm{v_{\trs(\ss)}}_{\infty}$,
	$\forall \ss > 0,\, \as$
	\ifArxiv\else
		\cite[Lem. A.4]{cdc2023_arxiv}
	\fi.
    Therefore, using \eqref{eq:anderson} and \eqref{eq:emp-sorted}, 
	\begin{equation} \label{eq:proof-upperbound-diff}
		\begin{aligned}
        \mbd_{\ss}
        - \inprod{\ph_{\ss'}, v_{\trs(\ss)}}
			 & \leq
			(\tfrac{\kthr}{\ss'} - \gamma) \bar{v}
			+
			 \tfrac{\kthr}{\ss'} \bar{v}
            + 
            \gamma \bar{v} 
			\\
			 & =  2\bar{v}(\tfrac{\kthr}{\ss'})  
             \labelrel[1]{\leq}{rel:ceiling}
			2 \bar{v} (\gamma + \tfrac{1}{\ss'}),
		\end{aligned}
	\end{equation}
    for all $\ss' > 0$, where \eqref{rel:ceiling}
    follows from $\kthr = \ceil{\ss' \gamma}\leq \ss' \gamma + 1$.
	By construction (see \eqref{eq:def-tau} and below),
	we have that both $\trs(\ss) \to \infty$ and $\ss' \to \infty$.
	Thus, using \eqref{eq:gam-asymptotic},
	\(
	\gamma + \tfrac{1}{\ss'} =
	\sqrt{\tfrac{\log(\nicefrac{1}{\conf})}{2\ss'}}
	+
	\tfrac{1}{\ss'}
	\to 0.
	\) Combined with \eqref{eq:proof-upperbound-diff},
	this yields that
	\begin{equation} \label{eq:proof-limit}
		\limsup_{\ss \to \infty} \alpha_{\Do}(v_{\trs(\ss)}) -
		\inprod{v_{\trs(\ss)}, \ph[\Do]} \leq 0.
	\end{equation}
	Finally, by the law of large numbers,
	\(
	\ph_{\ss'} \to \p
	\) and
	\(
	\ph_{\trs(\ss)} \to \p,
	\)\as.
	Thus (under \cref{asm:regularity}),
	\ifArxiv
		\cref{cor:convergence-vp}
	\else
		\cite[Cor. A.6]{cdc2023_arxiv}
	\fi
	ensures that
	\(
	\lim_{\ss \to \infty} \inprod{\ph_{\ss'}, L(\xx_{\trs(\ss)})} = \Vop(\p),
	\)
	which, combined with \eqref{eq:proof-limit} yields the required result.
\end{proof}

}
\ifHoeffding
\begin{thm}[Consistency -- Hoeffding bound]
   Let $\Vest_\ss$ be generated by \cref{alg:cadro}, for $\ss > 0$. 
   If $\mbd_{\ss'} = \mbdca{\Do}{v_{\trs(\ss)}}$ is selected
    according to \cref{prop:hoeffding}, with $v_{\trs(\ss)}$ as in \eqref{eq:primer}
   then, 
   \(
       \Vest_\ss \to \Vop(\p), \; \as. 
   \)
\end{thm}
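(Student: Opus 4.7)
The plan is to mirror the structure of the proof of \Cref{thm:consistency} for the ordered mean bound. Specifically, I would reduce the claim to verifying conditions \ref{cond:lower-bound} and \ref{cond:limit} of \cref{lem:consistency-conditions} for the Hoeffding bound \eqref{eq:hoeffding-bound}, combined with the training vector \eqref{eq:primer}.

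For condition \ref{cond:lower-bound}, the verification is essentially immediate: by \eqref{eq:hoeffding-bound} we have
\[
\mbdca{\Do}{v_{\trs(\ss)}} = \inprod{v_{\trs(\ss)}, \ph[\Do]} + \rr{\ss'}{\conf}\rg(v_{\trs(\ss)}),
\]
and since $\rr{\ss'}{\conf} \geq 0$ and $\rg(v_{\trs(\ss)}) \geq 0$, the inequality $\inprod{\ph[\Do], v_{\trs(\ss)}} \leq \mbdca{\Do}{v_{\trs(\ss)}}$ holds deterministically.

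For condition \ref{cond:limit}, I would split the bound into its two summands. The empirical term $\inprod{v_{\trs(\ss)}, \ph[\Do]}$ is handled exactly as in the proof of \Cref{thm:consistency}: since $\ph_{\trs(\ss)} \to \p$ and $\ph_{\ss'} \to \p$ almost surely by the law of large numbers, \cref{cor:convergence-vp} (or \cite[Cor. A.6]{cdc2023_arxiv}) ensures that $\inprod{\ph_{\ss'}, L(\xx_{\trs(\ss)})} \to \Vop(\p)$ almost surely. For the additive slack $\rr{\ss'}{\conf} \rg(v_{\trs(\ss)})$, I would invoke \cref{lem:bounded-v} (or \cite[Lem. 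A.4]{cdc2023_arxiv}) to obtain a deterministic constant $\bar{v}$ with $\nrm{v_{\trs(\ss)}}_{\infty} \leq \bar{v}$ for all $\ss$ almost surely, so that $\rg(v_{\trs(\ss)}) \leq 2\bar{v}$. By \eqref{eq:def-tau}, $\ss' \geq (1-c)\ss \to \infty$, and hence \eqref{eq:radius} gives $\rr{\ss'}{\conf} \to 0$, implying $\rr{\ss'}{\conf} \rg(v_{\trs(\ss)}) \to 0$ almost surely. Combining, $\limsup_{\ss \to \infty} \mbdca{\Do}{v_{\trs(\ss)}} \leq \Vop(\p)$ almost surely.

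The main obstacle is not really an obstacle at all here: unlike for the ordered mean bound where one had to carefully exploit the sorted structure and the upper bound $\vmx$ to get condition \ref{cond:limit}, the Hoeffding bound has the clean additive form $\text{empirical mean} + \text{(vanishing slack)}$, which makes both conditions essentially immediate once one imports the auxiliary results already established for the other proof (boundedness of $v_{\trs(\ss)}$ and continuity of $\Vop$ through \cref{cor:convergence-vp}). No new machinery is needed, and the claim follows by invoking \cref{lem:consistency-conditions}.
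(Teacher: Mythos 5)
Your proposal is correct and follows essentially the same route as the paper: reduce to conditions \ref{cond:lower-bound} and \ref{cond:limit} of \cref{lem:consistency-conditions}, note that the lower bound holds trivially since $\rr{\ss'}{\conf}\,\rg(v_{\trs(\ss)}) \geq 0$, and establish the limit condition by combining \cref{cor:convergence-vp} for the empirical term with \cref{lem:bounded-v} and the vanishing of $\rr{\ss'}{\conf}$ for the slack term. The only difference is that you spell out explicitly why $\ss' \to \infty$ via \eqref{eq:def-tau}, a step the paper leaves implicit.
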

\begin{proof} 
    We show that \cref{cond:lower-bound} and \cref{cond:limit} of \cref{lem:consistency-conditions} are satisfied. 
    \paragraph*{\Cref{cond:lower-bound}} Trivial, noting that $\rr{\ss'}{\conf} > 0$
    by \eqref{eq:radius}.
    \paragraph*{\Cref{cond:limit}} By the law of large numbers, 
    we have that $\ph[\ss'] \to \p, \as$, and thus, 
    by \cref{cor:convergence-vp},
    $\inprod{ \ph_{\ss'}, v_{\trs(\ss)}} \to \Vop(\p)$.
    Furthermore, by \cref{lem:bounded-v}, there exists a constant $\bar{v}$
    such that
    $\rg(v_{\trs(\ss)}) \leq 2\bar{v}$ for all $\ss \in \N$. 
    Thus, for $\rr{\ss}{\conf}$ given by \eqref{eq:radius}, we have
    \[ 
        \begin{aligned}
            \limsup_{\ss \to \infty} \mbdca{\Do}{v_{\trs(\ss)}} 
            &\leq
            \limsup_{\ss \to \infty} \inprod{\ph_{\ss'}, v_{\trs(\ss)}} + 
            \rr{\ss}{\conf} \bar{v} \\
            &= \Vop(\p).
        \end{aligned}
    \]
    This concludes the proof.
\end{proof}
\fi

\section{Illustrative example} \label{sec:numerical}

As an illustrative example, we consider the following 
\textit{facility location problem}, adapted from \cite[Sec. 8.7.3]{boyd_ConvexOptimization_2004}. 
Consider a bicycle sharing service setting out to determine 
locations
$x^{(i)} \in X_{i} \subseteq \Re^{2}$, $i \in [n_x]$,
at which to build stalls where bikes can be taken out or returned.
We will assume that $X_{i}$ are given (polyhedral) sets,
representing areas within the city
\rev{that would be}{} suitable for constructing a new bike stall.
Let $z^{(k)} \in \Re^2$, $k \in [\nModes]$, be given points of interest 
(public buildings, tourist attractions, parks, etc.). 
Suppose that a person located in the vicinity of some point $z^{(k)}$ 
decides to rent a bike.
Depending on the availability at 
the locations $x^{(i)}$, this person may be required
to traverse a distance 
\(
    \ell_{k}(x) = \max_{ i \in [n_x] } \nrm{x^{(i)} - z^{(k)}}_2,
\)
where $x = (x^{(i)})_{i \in [n_x]}$.
With this choice of cost, \cref{eq:DRO-reform} can be cast as a second order cone program.
Thus, if the demand is distributed over $(z^{(k)})_{k \in [\nModes]}$
according to the probability mass vector $\p \in \simplex_{\nModes}$, 
then the average cost to be minimized over $X=X_{1}\times \dots \times X_{\nModes}$ is given by $V(x, \p)$ as in \eqref{eq:parametric}.
\ifShowillustration
We will solve a randomly generated instance of the problem, 
illustrated in \cref{fig:illustration-facility-location}.
\begin{figure}[ht!]
    \centering
    \includegraphics{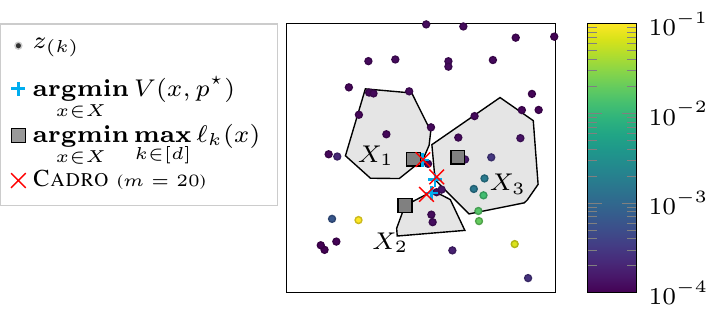}
     \vspace{-0.35cm}
    \caption{Illustration of the facility location problem. The colors of the points $z^{(k)}$ represent their probability $\p_k$.}
    \label{fig:illustration-facility-location}
\end{figure}
\fi 

As $\p$ is unknown, one has to collect data, e.g.,
by means of counting passersby at the locations $z^{(k)}$.
As this may be a costly operation, it is important
that the acquired data is used efficiently.
Furthermore, in order to ensure that the potentially large up-front
investment is justified,
we are required to provide a certificate stating that, with high confidence,
the quality of the solution will be no worse than what is predicted.
Thus, given our collected sample of size $\ss$,
our aim is to compute estimates $\Vest_\ss$,
satisfying \eqref{eq:guarantee-coverage}. 

We compare the following data-driven methods.
\begin{description}
   \item[\cadro] Solves \eqref{eq:dro-problem} according to \cref{alg:cadro}, setting $\trs(\ss)$ as in \eqref{eq:choice-tau}, with $\mu=0.01, \nu=0.8$.
   \item[$\Div$-\ac{DRO}] Solves \eqref{eq:dro-problem}, with \rev{
   a \ac{TV} ambiguity set 
   \(
        \amb_\ss = \{ p \in \simplex_{\nModes} \mid \nrm{p - \hat{p}_{\ss}}_1 \leq \rTV_{\ss}\}.
   \)
   The radius $\rTV_\ss$ is selected according to \cite[Thm 2.1]{weissman_InequalitiesL1Deviation_2003}
   \footnote{
    This is a slightly improved version of the classical 
    Bretagnolle-Huber-Carol inequality \cite[Prop. A.6.6]{vandervaart_WeakConvergenceEmpirical_2000}.
   }
    }{
    an ambiguity set of the form 
    \(
        \amb_{\ss} = \{ 
            p \in \simplex_{\nModes} \mid \Div(\ph_\ss, p) \leq r^\Div_\ss 
        \}, 
    \)    
    with $\Div \in \{\TV, \KL, \Wa\}$ the total variation,
    Kullback-Leibler, 
    and Wasserstein distance/divergence\footnote{
        We use $K_{ij} = \nrm{z^{(i)} - z^{(j)}}_2$, $i,j \in [\nModes]$ as the transportation cost.
    } (cf. \cite[Tb. I]{tac2023}).
    $\rTV_\ss, \rKL_{\ss}$ are selected according to 
    \cite[Thm 2.1]{weissman_InequalitiesL1Deviation_2003}
    \footnote{
        This is a slightly improved version of the classical 
        Bretagnolle-Huber-Carol inequality 
        \cite[Prop. A.6.6]{vandervaart_WeakConvergenceEmpirical_2000}.
    }, \cite[Thm. 5]{vanparys_DataDecisionsDistributionally_2021}, respectively, and
    $\rW_{\ss} = \max_{i,j \in [\nModes]} K_{ij} \rTV_{\ss}$ \cite{gibbs_ChoosingBoundingProbability_2002b},
    }
    ensuring that \eqref{eq:inclusion} is satisfied.
   \item[\ac{SAA}] Using the same data partition $\{\Dt, \Do\}$ as \cadro, we use 
   $\Dt$ to compute $x_\ss = \xx_{\trs(\ss)}$ as in \eqref{eq:primer}, and 
   we use $\Do$ to obtain a high-confidence upper bound 
   $\Vest_\ss = \mbdca{\Do}{L(\xx_{\trs(\ss)})}$, utilizing 
   \cref{prop:orm}.
\end{description}
\rev{}{
}
Note that \rev{\tvdro{}}{$\Div$-\ac{DRO}} does not 
require an independent data sample in order to satisfy \eqref{eq:guarantee-coverage}.

\begin{remark}
Other methods could be used to validate \ac{SAA}
(e.g., cross-validation \cite{hastie_ElementsStatisticalLearning_2009}, replications \cite{bayraksan_AssessingSolutionQuality_2006}), 
but these methods only guarantee the required confidence level asymptotically. 
In order to obtain a fair comparison, 
we instead use the same 
\rev{concentration inequality (namely \eqref{eq:radius})}{
    mean bound, namely \eqref{eq:anderson}
} for both 
\cadro{} and \ac{SAA}, so both methods provide the same theoretical guarantees.
Moreover, we note that a different data partition could be used for \ac{SAA}. 
However, preliminary experiments have indicated
that significantly increasing or decreasing $\trs(\ss)$ resulted in 
deteriorated bounds on the cost.
\end{remark}

We set $n_x = 3$,
$\nModes = 50$, $\conf = 0.01$, and apply each method
for 100 independently drawn datasets of size $\ss$. 
In \cref{fig:results}, we plot the estimated costs $\Vest_\ss$ and 
the achieved out-of-sample cost $V(\xest_\ss, \p)$, 
for increasing values of $\ss$. 
We observe that \cadro{} provides a 
sharper cost estimate $\Vest_\ss$ than the other approaches. In particular,
\rev{
\ac{SAA} performs relatively poorly for low sample sizes. In this regime, 
\cadro{} and \tvdro{} perform similarly, as initially, the ambiguity set 
will approximately cover the entire unit simplex. 
For $\ss \geq 10$, $\rr{\ss}{\beta}$ becomes non-vacuous, 
and the \cadro{} cost estimate decreases at a faster rate than that of 
\tvdro{}.
\cref{fig:results}\hyperlink{subfig:b}{(right)} additionally shows that 
the true out-of-sample cost achieved by \cadro{}
decreases more quickly than the compared methods, 
illustrating that it does not return an 
overly conservative solution which ignores the available data.
}{
the classical DRO formulations require relatively large amounts of 
data before obtaining a non-vacuous upper bound on the cost.
The right-hand panel in \cref{fig:results} shows that additionally, 
\cadro{} returns solutions which exhibit superior 
out-of-sample performance than the compared approaches, 
illustrating that it does not rely on conservative solutions to obtain 
better upper bounds.
}
\begin{figure}[ht!]
    {\footnotesize
    \centering
    \begin{minipage}{0.48\linewidth}
        \centering
        \includegraphics{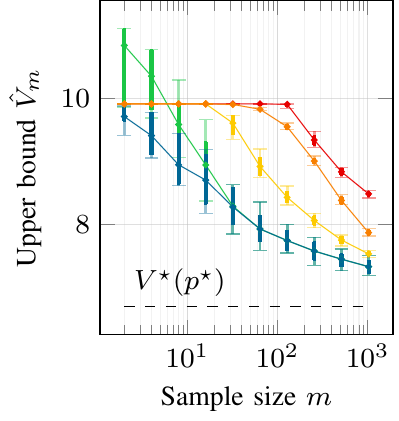}   
    \end{minipage}\hfill
    \begin{minipage}{0.48\linewidth}
        \centering
        \includegraphics{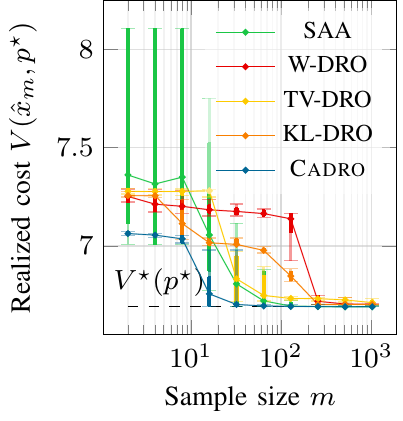}   
    \end{minipage}
    }
    \caption{
        Results of the facility location problem of \Cref{sec:numerical}. 
        (left): 
         The cost estimates $\Vest_\ss$
        satisfying \eqref{eq:guarantee-coverage} and \eqref{eq:guarantee-consistency};
        (right): 
        True out of sample cost $V(\xest_\ss, \p)$.
        The points indicate the sample mean, 
        the solid errorbars indicate the empirical 0.95 (upper and lower) quantiles
        and the semi-transparent errorbars indicate the largest and smallest values over 100 independent runs. 
    }
    \label{fig:results}
\end{figure}

\section{Conclusion and future work}
We proposed a \ac{DRO} formulation, named 
\textit{cost-aware} \ac{DRO} (\cadro), 
in which the ambiguity set is designed to only 
restrict errors in the distribution that are predicted to
have significant effects on the worst-case expected cost. 
We proved out-of-sample performance bounds and 
consistency of the resulting \ac{DRO} scheme, 
and demonstrated empirically that this approach may be used to 
robustify against poor distribution estimates at small sample sizes, 
while remaining considerably less conservative than 
existing \ac{DRO} formulations.
In future work, we aim to extend the work to continuous distributions.

\bibliographystyle{hieeetr}
\bibliography{references}

\ifArxiv
\begin{appendix}
    \subsection{Technical lemmas}
    \ifHoeffding
\begin{lem} \label{lem:aux-upper-bound}
    Let $\e_i$ denote the i'th standard basis vector.
    \[
        | \inprod{p - \e_i, v}| \leq \rg(v), 
    \]
    for all $i \in [\nModes]$, $v \in \Re^\nModes$ and $p \in \simplex_{\nModes}$.
\end{lem}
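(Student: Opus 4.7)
The plan is to expand the inner product and recognize that both terms lie in the same interval determined by the extreme entries of $v$. Specifically, I would first write
\[
  \inprod{p - \e_i, v} = \inprod{p, v} - v_i.
\]
Since $p \in \simplex_{\nModes}$, $\inprod{p, v} = \sum_{j=1}^{\nModes} p_j v_j$ is a convex combination of the entries of $v$, so it lies in the interval $[\mn{v}, \mx{v}]$. Trivially, $v_i$ also lies in this interval. The difference of two numbers in $[\mn{v}, \mx{v}]$ has absolute value at most $\mx{v} - \mn{v} = \rg(v)$, which yields the claim.

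There is no real obstacle here; the argument is essentially a two-line calculation. The only subtle point worth stating carefully is the convex-combination observation, which uses both $p_j \geq 0$ and $\sum_j p_j = 1$. No additional assumptions on $v$ are needed (it can be any vector in $\Re^{\nModes}$, including negative entries), since the range $\rg(v)$ is translation-invariant and already captures the worst-case spread.
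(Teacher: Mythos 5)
Your argument is correct and is essentially the same as the paper's: both proofs bound $\inprod{p,v}$ and $v_i$ by the extreme entries $\mn{v}$ and $\mx{v}$ (the paper via $\max_{p\in\simplex_{\nModes}}\inprod{p,v}=\mx{v}$ and its negation, you via the convex-combination observation) and conclude that the difference has absolute value at most $\rg(v)$. Your phrasing is, if anything, slightly cleaner.
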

\begin{proof} For any $i \in [\nModes], v \in \Re^{\nModes}$ and $p \in \simplex_{\nModes}$,
    \[ 
    \begin{aligned}
        | \inprod{p - \e_i, v} | 
            &\leq  
        \max \{ \max_{i \in [\nModes]} \inprod{p,v} - v_i, \max_{i \in [\nModes]}  v_i-\inprod{p,v} \}\\
            &=
        \max \{ \inprod{p,v} - \mn{v} , \mx{v} - \inprod{p,v} \}\\
            &\labelrel{\leq}{rel:ineq-range}
        \mx{v} - \mn{v} = \rg(v),\\
    \end{aligned}   
    \]
    where \eqref{rel:ineq-range} follows from the fact that $\max_{p \in \simplex} \inprod{p,v} = \mx{v}$
    and 
    $\max_{p \in \simplex} -\inprod{p,v} = \max_{i \in [\nModes]} \{-v_i\} = -\mn{v}$.
\end{proof}
    \fi
\begin{lem}[Upper bound] \label{lem:upper-bound-dro-cost}
    Fix $v \in \Re^{\nModes}$ and 
    consider a sample $\D$
    \rev{with
    empirical distribution $\ph \in \simplex_{\nModes}$}{}.
    For an ambiguity set $\ambca{\D}{v}$, 
    given by \eqref{eq:ambiguity-shape} with 
    \rev{
        $\rr{|\D|}{\conf} = r$
    }{
        mean bound $\mbdca{\D}{v}$
    }, define
    \(
        \Vmax(x) \dfn \max_{p \in \ambca{\D}{v}} V(x,p).
    \)
    Then, 
    for all $x \in X$, we have
    \[ 
        \Vmax(x) \leq 
        \rev{
        \inprod{\ph, v} + 2 r \nrm{v}_\infty + \nrm{L(x) - v}_{\infty}
    }{
        \mbdca{\D}{v} + \nrm{L(x) - v}_{\infty} 
    },
        \quad \as.
    \]
\end{lem}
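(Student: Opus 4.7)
The strategy is a direct algebraic bound: exploit the additive decomposition
\(
\inprod{p, L(x)} = \inprod{p, v} + \inprod{p, L(x) - v},
\)
then bound each term using, respectively, the defining constraint of $\ambca{\D}{v}$ and the simplex structure.

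More concretely, I would proceed as follows. Fix an arbitrary $x \in X$ and let $p \in \ambca{\D}{v}$ be arbitrary. The definition \eqref{eq:ambiguity-shape} of the ambiguity set immediately yields $\inprod{p, v} \leq \mbdca{\D}{v}$, handling the first summand. For the second, since $p \in \simplex_{\nModes}$ we have $p_i \geq 0$ and $\sum_{i=1}^{\nModes} p_i = 1$, so
\[
\inprod{p, L(x) - v} = \sum_{i=1}^{\nModes} p_i \bigl(L(x)_i - v_i\bigr) \leq \max_{i \in [\nModes]} \bigl(L(x)_i - v_i\bigr) \leq \nrm{L(x) - v}_{\infty}.
\]
Combining the two estimates gives $\inprod{p, L(x)} \leq \mbdca{\D}{v} + \nrm{L(x) - v}_{\infty}$ for every $p \in \ambca{\D}{v}$. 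Taking the supremum over $p \in \ambca{\D}{v}$ yields the claim, since the right-hand side is independent of $p$.

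There is no real obstacle in this argument—it is an elementary consequence of the linearity of $V(x, \argdot)$ and the two constraints defining $\ambca{\D}{v}$. The only subtlety worth flagging is that the bound holds pointwise (not merely in expectation), so the ``$\as$'' qualifier in the statement is automatic: the inequality is valid for every realization of $\D$. In particular, no probabilistic concentration argument is needed here; the role of the mean-bound $\mbdca{\D}{v}$ is used purely as the constant on the right-hand side of the polyhedral constraint defining $\ambca{\D}{v}$.
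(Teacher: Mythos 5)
Your proof is correct and follows essentially the same route as the paper's: both decompose $\inprod{p, L(x)} = \inprod{p, v} + \inprod{p, L(x) - v}$, bound the first term via the defining constraint of $\ambca{\D}{v}$ and the second via $\ambca{\D}{v} \subseteq \simplex_{\nModes}$ together with $\max_{p \in \simplex_{\nModes}} \inprod{p, z} = \max_i z_i$. Your remark that the ``$\as$'' qualifier is automatic because the bound holds pointwise in $\D$ is also accurate.
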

\begin{proof}
    Define $\err(x) = L(x) - v$ for $x \in X$.
    \rev{
    Since $\rg(v) \leq 2 \nrm{v}_\infty$,}{}
    We have 
    \[ 
    \begin{aligned}
      \Vmax (x) &= \max_{p \in \ambca{\D}{v}} \inprod{p, v} + \inprod{p, \err(x)}\\ 
                &\stackrel{\eqref{eq:ambiguity-shape}}{\leq} 
                \rev{
                    \inprod{\ph, v} + 2 r \nrm{v}_\infty}{
                    \mbdca{\D}{v}
                }
                + \max_{p \in \ambca{\D}{v}}\inprod{p, \err(x)}.
                \rev{&\leq \inprod{\ph, v} + 2 r \nrm{v}_\infty + \nrm{L(x)-v}_{\infty},}{}
    \end{aligned}
    \]
    \rev{
    where the last inequality}
    {The claim directly}
    follows because $\ambca{\D}{v} \subseteq \simplex_{\nModes}$
    \ and $\max_{p \in \simplex_{\nModes}}\inprod{p,z} = \max_{i} \{ z_i \}$, $\forall z \in \Re^{\nModes}$ \cite[Ex. 4.10]{beck_FirstOrderMethodsOptimization_2017}.
\end{proof}
\begin{lem}[Uniform level-boundedness] \label{lem:uniform-level-bounded}
    If \cref{asm:regularity}\ref{asm:level-bounded} holds, then
    \(
    V(x,p) = \inprod{p, L(x)} + \delta_{X \times \simplex_{\nModes}}(x, p)
    \)
    is level-bounded in $x$ locally uniformly in $p$.
\end{lem}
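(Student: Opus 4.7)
My plan is to verify the definition of locally uniform level-boundedness directly: for each $\bar{p} \in \simplex_{\nModes}$ and $\alpha \in \Re$, I must exhibit a neighborhood $N$ of $\bar{p}$ and a bounded set $B \subseteq \Re^n$ such that
\[
    \{ x \in \Re^n : V(x,p) \leq \alpha \} \subseteq B, \quad \forall p \in N.
\]
Outside $X \times \simplex_{\nModes}$ the function $V$ equals $+\infty$, so only $x \in X$ and $p \in N \cap \simplex_{\nModes}$ need to be considered, on which $V(x,p) = \sum_{i \in [\nModes]} p_i \ell_i(x)$.

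The first step is to secure a \emph{uniform lower bound} on each component cost. Because $\ell_i$ is continuous by \cref{asm:regularity}\ref{asm:lsc}, $X$ is closed, and $\ellb_i$ is level-bounded by \cref{asm:regularity}\ref{asm:level-bounded}, the infimum of $\ellb_i$ is attained and finite by \cite[Thm. 1.9]{rockafellar_VariationalAnalysis_1998}. Hence there exist finite constants $m_i$ with $\ell_i(x) \geq m_i$ for all $x \in X$; set $m \dfn \min_{i \in [\nModes]} m_i$.

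The second step exploits the simplex constraint on $\bar{p}$. Since $\bar{p} \in \simplex_{\nModes}$, at least one index $i^{\star}$ satisfies $\bar{p}_{i^{\star}} > 0$; put $\delta \dfn \bar{p}_{i^{\star}}/2$ and take $N$ to be any open neighborhood of $\bar{p}$ (e.g., an $\ell_\infty$-ball of radius $\delta$) on which $p_{i^{\star}} \geq \delta$ uniformly. For any $p \in N \cap \simplex_{\nModes}$ and any $x \in X$ in the $\alpha$-level set, isolating the $i^{\star}$-term and using $\sum_{i} p_i = 1$ together with the uniform lower bound yields
\[
    p_{i^{\star}} \ell_{i^{\star}}(x) \leq \alpha - \tsum_{i \neq i^{\star}} p_i \ell_i(x) \leq \alpha + |m|.
\]
Dividing by $p_{i^{\star}} \geq \delta$ produces a constant upper bound $\alpha^{\prime}$ on $\ell_{i^{\star}}(x)$ that depends only on $\alpha$, $m$, $\delta$, and in particular \emph{not} on $p$. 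Applying the level-boundedness of $\ellb_{i^{\star}}$ then shows that $B \dfn \{ x \in X : \ell_{i^{\star}}(x) \leq \alpha^{\prime} \}$ is bounded and contains the level set of $V(\argdot, p)$ for every $p \in N$.

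The only subtle point---and the main obstacle the argument has to navigate---is the treatment of indices with $\bar{p}_i = 0$. For those indices, $p_i$ can be arbitrarily small throughout any neighborhood of $\bar{p}$, so the term $p_i \ell_i(x)$ cannot by itself control $\ell_i(x)$. The argument sidesteps this by routing all the control through the single coordinate $i^{\star}$ that is uniformly bounded away from zero, and using the remaining coordinates only via the global lower bound $m$---which is precisely where the attainment argument (hence the continuity in \cref{asm:regularity}\ref{asm:lsc}) becomes essential.
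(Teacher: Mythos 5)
Your proof is correct, but it follows a genuinely different route from the paper's. The paper's argument is a one-line pigeonhole: since $V(x,p)$ is a convex combination of the $\ellb_i(x)$, the bound $V(x,p)\leq\alpha$ forces $\ellb_i(x)\leq\alpha$ for at least one $i$, so $\lev_{\leq\alpha}V(\argdot,p)\subseteq\bigcup_{i\in[\nModes]}\lev_{\leq\alpha}\ellb_i$, a finite union of bounded sets. This yields a \emph{single} bounded set valid for every $p$ in the simplex (global, not merely local, uniformity), and it uses only \cref{asm:regularity}\ref{asm:level-bounded} --- no lower bound on the costs, hence no appeal to attainment or to \cref{asm:regularity}\ref{asm:lsc}. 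Your argument instead verifies the definition pointwise in $\bar{p}$: you bound each $\ell_i$ from below on $X$ via inf-compactness, isolate a coordinate $i^{\star}$ with $p_{i^{\star}}\geq\delta>0$ on a neighborhood, and transfer the level bound to $\ellb_{i^{\star}}$ alone. This works, and your discussion of the indices with $\bar{p}_i=0$ is exactly the right concern; the price is the extra machinery (finite infima of the $\ell_i$, hence continuity and \cite[Thm.~1.9]{rockafellar_VariationalAnalysis_1998}) and a bounded set that depends on $\bar{p}$ through $i^{\star}$ and $\delta$. One cosmetic point: in the division step, $(\alpha+|m|)/p_{i^{\star}}\leq(\alpha+|m|)/\delta$ only when $\alpha+|m|\geq 0$; taking $\alpha'=(|\alpha|+|m|)/\delta$ covers all cases. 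Neither of these affects correctness, but the paper's decomposition is the more economical one and, unlike yours, would survive dropping the continuity assumption entirely.
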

\begin{proof}
    Since $V(x,p)$ is a convex combination of $\ellb_{i}(x)$, $i \in [\nModes]$, 
    \( 
        V(x,p) \leq \alpha 
    \) implies that \( 
        \exists i \in [\nModes]: \ellb_i(x) \leq \alpha. 
    \) Therefore, 
    $\lev_{\leq \alpha} V(\argdot, p) \subseteq 
        \bigcup_{i \in [\nModes]} \lev_{\leq \alpha} \ellb_i \nfd U_\alpha$, 
        for all $p \in \simplex_{\nModes}$. 
    By \cref{asm:regularity}\ref{asm:level-bounded}, 
    $\lev_{\leq \alpha} \ellb_{i}$ is bounded for all $i \in [\nModes]$. 
    Since the union of a finite number of bounded sets is bounded,
    $U_\alpha$ is bounded. 
    Furthermore, for $p \notin \simplex_{\nModes}$, $V(x,p) = \infty$, and 
    thus $\lev_{\leq \alpha} V(\argdot,p) = \emptyset \subseteq U_\alpha,
    \forall p \notin \simplex_{\nModes}$
    Thus, 
    $\lev_{\leq \alpha} V(\argdot,p) \subseteq U_{\alpha}$
    for all $p \in \Re^{\nModes}$. 
\end{proof}
\begin{lem}[Uniform boundedness of $v_{\trs(\ss)}$] \label{lem:bounded-v}
    Let $v_{\trs(\ss)}$ be defined as in \eqref{eq:primer}. Then, 
    there exists a $\bar{v} \in \Re_+$ such that 
    \[ 
        \nrm{v_{\trs(\ss)}}_{\infty} \leq \bar{v}, \; \forall \ss \in \N, \; \as. 
    \]
\end{lem}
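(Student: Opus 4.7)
My strategy is to first establish that $\xx_{\trs(\ss)}$ lies in a deterministic compact subset of $X$ for every $\ss$, after which a uniform bound on $v_{\trs(\ss)} = L(\xx_{\trs(\ss)})$ will follow from continuity of $L$. The key point is that the a.s.\ claim should be reduced to a deterministic statement, since the only randomness in $v_{\trs(\ss)}$ enters through $\ph_{\trs(\ss)} \in \simplex_\nModes$, and $\simplex_\nModes$ is a compact set on which the uniform level-boundedness of $V(\argdot, p)$ holds.

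First, I would fix an arbitrary reference point $x_0 \in X$ (nonempty by assumption) and define the deterministic constant $\alpha_0 \dfn \max_{i \in [\nModes]} \ell_i(x_0)$, which is finite since each $\ell_i$ is real-valued. For every $p \in \simplex_\nModes$, one then has $V(x_0, p) = \inprod{p, L(x_0)} \leq \alpha_0$, so optimality of $\xx_{\trs(\ss)}$ in \eqref{eq:primer} yields the deterministic upper bound $V(\xx_{\trs(\ss)}, \ph_{\trs(\ss)}) \leq V(x_0, \ph_{\trs(\ss)}) \leq \alpha_0$. Equivalently, $\xx_{\trs(\ss)} \in \lev_{\leq \alpha_0} V(\argdot, \ph_{\trs(\ss)})$ almost surely, for every $\ss$.

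Next, I would invoke \cref{lem:uniform-level-bounded}, whose proof exhibits a set $U_{\alpha_0} = \bigcup_{i \in [\nModes]} \lev_{\leq \alpha_0} \ellb_i$ that contains $\lev_{\leq \alpha_0} V(\argdot, p)$ for every $p \in \simplex_\nModes$, and that is bounded \emph{independently of $p$}. Each $\ellb_i$ is lsc under \cref{asm:regularity} (as noted in the paragraph following that assumption), so $U_{\alpha_0}$ is closed, and consequently compact. Since each $\ell_i$ is continuous on $X$ and $U_{\alpha_0} \subseteq X$ (because $\ellb_i = +\infty$ outside $X$), the constant $\bar{v} \dfn \max_{i \in [\nModes]} \max_{x \in U_{\alpha_0}} |\ell_i(x)|$ is finite, and we conclude $\nrm{v_{\trs(\ss)}}_{\infty} \leq \bar{v}$ for every $\ss \in \N$, almost surely.

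There is no substantial obstacle here; the entire argument reduces to the observation that the upper bound $\alpha_0$ on the optimal value can be chosen independently of the random sample $\Dt$, which in turn allows the level-set bound of \cref{lem:uniform-level-bounded} to produce a genuinely deterministic compact set containing every $\xx_{\trs(\ss)}$. The rest is a routine compactness-plus-continuity argument.
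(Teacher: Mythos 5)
Your proof is correct and follows essentially the same route as the paper's: a deterministic upper bound on the optimal value of \eqref{eq:primer} (you use $\alpha_0=\max_i\ell_i(x_0)$ for a fixed $x_0\in X$, the paper uses the robust value $\bar r=\min_{x\in X}\max_i\ell_i(x)$; either works, and yours sidesteps having to justify attainment of that min), followed by \cref{lem:uniform-level-bounded} to trap every $\xx_{\trs(\ss)}$ in a single deterministic compact subset of $X$, and then continuity of the $\ell_i$ on that set. The only cosmetic difference is that you verify closedness of $U_{\alpha_0}$ directly via lower semicontinuity of the $\ellb_i$, whereas the paper simply passes to a compact superset $C$ and works on $X\cap C$.
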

\begin{proof}
        By \cref{asm:regularity}, there exists
        \[
            \bar{r} \dfn \min_{x \in X} \max_{i \in [\nModes]} \ell_i(x) \geq \min_{x \in X} V(x,p), \; \forall p \in \simplex_{\nModes},
        \]
        so that, by \eqref{eq:primer},
         \begin{equation*} 
        \xx_{\trs(\ss)} \in \lev_{\leq \bar{r}} V(\argdot, \ph_{\trs(\ss)}), \; \forall \ss \in \N.
         \end{equation*}
        Since $V(x,p)$ is level-bounded uniformly in $p$
        (cf. \cref{lem:uniform-level-bounded}),
    there exists a compact set $C \subseteq \Re^{n}$ satisfying 
    \begin{equation} \label{eq:proof-x-contain-lev}
        \xx_{\trs(\ss)} \in \lev_{\leq \bar{r}} V(\argdot, \ph_{\trs(\ss)}) \subseteq C, \quad \forall \ss \in \N.
    \end{equation}
    Hence, since $\ell_i$, $i \in [\nModes]$ are continuous, they attain
    their minima $\underline{v}_{i}$ and maxima $\bar{v}_{i}$ on $X \cap C$.
    Using \eqref{eq:proof-x-contain-lev}, combined with \eqref{eq:primer},
    we thus have
    \(
    \nrm{v_{\trs(\ss)}}_{\infty}
    \leq
    \max \{  |\underline{v}_i| , |\bar{v}_i|\}_{i \in [\nModes]}
    \nfd \bar{v}
    \) for all $\ss \in \N$,
    as required.
\end{proof}
\begin{lem}[Parametric stability] \label{lem:parametric-stability}
    If \cref{asm:regularity} is satisfied, then the following statements hold:
    \begin{statements}
        \item \label{stat:continuous-value}
         the optimal value $\Vop(p)$ defined by \eqref{eq:parametric},
        is continuous at $\p$ relative to $\simplex_{\nModes}$.
        \item \label{stat:continuous-solution}
         For any $\ph_\ss \to \p$, and for any $\xx_{\ss} \in \Xop(\ph_\ss)$, 
        $\{\xx_{\ss}\}_{\ss \in \N}$ is bounded and all its cluster points lie in 
        $\Xop(\p)$. 
    \end{statements}
    
\end{lem}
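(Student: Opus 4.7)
My plan is to deduce both parts of the lemma from a classical parametric stability theorem for extended-real-valued minimization, e.g.\ \cite[Thm.~1.17]{rockafellar_VariationalAnalysis_1998}, applied to the function $V(x,p) \dfn \inprod{p, L(x)} + \delta_{X \times \simplex_{\nModes}}(x,p)$ already considered in \cref{lem:uniform-level-bounded}. Before invoking that result, I would verify its hypotheses: $V$ is proper (since $X \neq \emptyset$ and $V$ is finite on $X \times \simplex_{\nModes}$); $V$ is jointly lower semicontinuous on $\Re^n \times \Re^{\nModes}$ (continuous on the closed product $X \times \simplex_{\nModes}$ by \cref{asm:lsc} together with the bilinear dependence on $p$, and $+\infty$ outside); and $V$ is level-bounded in $x$ locally uniformly in $p$, which is precisely the content of \cref{lem:uniform-level-bounded}. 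Under these hypotheses the cited theorem delivers, at every $p \in \simplex_{\nModes}$, lower semicontinuity of $\Vop$, nonemptiness and compactness of $\Xop(p)$, and outer semicontinuity together with local boundedness of the set-valued map $\Xop$.

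For statement~(i), the lower semicontinuity of $\Vop$ at $\p$ is already in hand. For the matching upper bound I would use a test-point argument: fix any $\xop \in \Xop(\p)$ (nonempty by the above) and let $p_n \to \p$ in $\simplex_{\nModes}$; then
\[
    \Vop(p_n) \leq V(\xop, p_n) = \inprod{p_n, L(\xop)} \to \inprod{\p, L(\xop)} = \Vop(\p),
\]
using continuity of the linear map $p \mapsto \inprod{p, L(\xop)}$. Hence $\limsup_{n} \Vop(p_n) \leq \Vop(\p)$, which combined with lower semicontinuity yields continuity of $\Vop$ at $\p$ relative to $\simplex_{\nModes}$.

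For statement~(ii), I would appeal directly to the local boundedness and outer semicontinuity of $\Xop$ obtained above. Local boundedness around $\p$ forces the tail of $\{\xx_\ss\}_{\ss}$ into a bounded neighbourhood of $\Xop(\p)$, so the sequence admits cluster points, and outer semicontinuity of $\Xop$ ensures that every such cluster point lies in $\Xop(\p)$.

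The only nontrivial ingredient is verifying level-boundedness locally uniformly in $p$, which has already been taken care of in \cref{lem:uniform-level-bounded}; after that, the argument reduces to a direct application of a textbook result together with a one-line continuity computation in $p$.
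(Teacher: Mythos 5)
Your proposal is correct and follows essentially the same route as the paper: both verify that $V(x,p)=\inprod{p,L(x)}+\delta_{X\times\simplex_{\nModes}}(x,p)$ is proper and jointly lower semicontinuous and invoke \cref{lem:uniform-level-bounded} for level-boundedness in $x$ locally uniformly in $p$, then apply \cite[Thm.~1.17]{rockafellar_VariationalAnalysis_1998}. The only cosmetic difference is that you spell out the test-point argument for upper semicontinuity of $\Vop$, which is precisely the content of part (c) of that theorem as cited in the paper.
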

\begin{proof}
If $L$ is continuous, then $V(x,p)$ can be written as
the composition $V \equiv g \circ F$ of the \ac{lsc} function
$g: \Re^{2 \nModes} \to \Re: (y,z) \mapsto \inprod{y,z} + \delta_{\simplex_{\nModes}}(p)$,
and $F: \Re^{n \nModes} \to \Re^{2\nModes}: (x, p) \mapsto (L(x), p)$. 
By \cite[Ex. 1.40(a)]{rockafellar_VariationalAnalysis_1998}, 
this implies that $V$ is \ac{lsc}, and so is $(x,p) \mapsto V(x,p) + \delta_X(x)$.
Moreover, by \cref{lem:uniform-level-bounded}, it is level-bounded in $x$ locally uniformly in $p$.
Furthermore, 
$p \mapsto V(\xx,p)$ is continuous relative to $\simplex_{\nModes}$ for all fixed $\xx \in X$. 
Thus, \cite[Thm. 1.17(b),(c)]{rockafellar_VariationalAnalysis_1998} applies, 
translating directly to \cref{stat:continuous-value,stat:continuous-solution}.
\end{proof}
\newcommand{\ourseq}{\inprod{\ph_\ss, L(\xx_{\trs(\ss)})}}
\begin{cor} \label{cor:convergence-vp}
	Let $\{\xx_{\trs(\ss)}\}_{\ss \in \N}$ be generated by
	\eqref{eq:primer}
	and let $\{\ph_\ss \in \simplex_{\nModes}\}_{\ss \in \N}$ be some sequence
	with $\ph_{\ss} \to \p$.
	Then,
	\[
		\lim_{\ss \to \infty} \ourseq = \Vop(\p)
	\]
\end{cor}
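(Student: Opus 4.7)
\textbf{Proof plan for \cref{cor:convergence-vp}.} The plan is to reduce the claim to an application of the parametric stability result \cref{lem:parametric-stability} on the sequence $\{\ph_{\trs(\ss)}\}$, and then account for the discrepancy between $\ph_{\trs(\ss)}$ (the empirical distribution associated with the optimizer $\xx_{\trs(\ss)}$) and the given sequence $\ph_\ss$ by a simple Hölder bound that exploits the uniform boundedness of $v_{\trs(\ss)} = L(\xx_{\trs(\ss)})$ from \cref{lem:bounded-v}.

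Concretely, I would first observe that, since $\trs(\ss) \to \infty$ by \eqref{eq:tau-limit}, the strong law of large numbers gives $\ph_{\trs(\ss)} \to \p$ almost surely. Applying \cref{stat:continuous-value} of \cref{lem:parametric-stability} then yields
\[
    \Vop(\ph_{\trs(\ss)}) = \inprod{\ph_{\trs(\ss)}, L(\xx_{\trs(\ss)})} \to \Vop(\p), \;\as,
\]
where the equality uses $\xx_{\trs(\ss)} \in \Xop(\ph_{\trs(\ss)})$ via \eqref{eq:primer}.

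Next, I would write the telescoping decomposition
\[
    \ourseq = \Vop(\ph_{\trs(\ss)}) + \inprod{\ph_\ss - \ph_{\trs(\ss)}, L(\xx_{\trs(\ss)})},
\]
and bound the second term by Hölder's inequality, obtaining
\[
    | \inprod{\ph_\ss - \ph_{\trs(\ss)}, L(\xx_{\trs(\ss)})} | \leq \nrm{\ph_\ss - \ph_{\trs(\ss)}}_1 \, \nrm{L(\xx_{\trs(\ss)})}_{\infty} \leq \bar{v} \, \nrm{\ph_\ss - \ph_{\trs(\ss)}}_1,
\]
where the last step invokes \cref{lem:bounded-v}. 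Because both $\ph_\ss \to \p$ (by hypothesis) and $\ph_{\trs(\ss)} \to \p$ (a.s.), the $\ell_1$ norm in the bound vanishes in the limit, so the cross term tends to $0$ almost surely. Combining this with the limit of $\Vop(\ph_{\trs(\ss)})$ established above yields the claim.

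The only delicate ingredient is the uniform-in-$\ss$ boundedness of $L(\xx_{\trs(\ss)})$, which is precisely the content of \cref{lem:bounded-v} and in turn relies on the uniform level-boundedness afforded by \cref{asm:regularity}\ref{asm:level-bounded}. Everything else is essentially routine, so I do not anticipate any serious obstacle beyond invoking the right lemma at the right moment.
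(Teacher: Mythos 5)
Your proof is correct, but it takes a different route from the paper's. The paper sandwiches the limit: it gets $\liminf_{\ss\to\infty}\ourseq \geq \Vop(\p)$ from $\ourseq \geq \Vop(\ph_\ss)$ together with continuity of the value function (\cref{stat:continuous-value} of \cref{lem:parametric-stability}), and then rules out $\limsup_{\ss\to\infty}\ourseq > \Vop(\p)$ by a contradiction argument using \cref{stat:continuous-solution}, i.e.\ the fact that every cluster point of the minimizers $\xx_{\trs(\ss)} \in \Xop(\ph_{\trs(\ss)})$ lies in $\Xop(\p)$. You instead never touch the solution-set stability statement: you evaluate the inner product at the ``matching'' distribution $\ph_{\trs(\ss)}$, where it equals $\Vop(\ph_{\trs(\ss)})$ exactly, and control the mismatch $\inprod{\ph_\ss - \ph_{\trs(\ss)}, L(\xx_{\trs(\ss)})}$ by H\"older plus the uniform bound $\bar v$ from \cref{lem:bounded-v}. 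This is more elementary and quantitative (it gives an explicit $O(\bar v\,\nrm{\ph_\ss - \ph_{\trs(\ss)}}_1)$ error term), whereas the paper's argument only needs boundedness of $\{\xx_{\trs(\ss)}\}$ rather than the numerical bound $\bar v$ on $L(\xx_{\trs(\ss)})$ --- though in this paper both facts come from the same level-boundedness machinery, so nothing is really saved. One point in your favor: both proofs secretly need $\ph_{\trs(\ss)} \to \p$ (a.s., via the law of large numbers and \eqref{eq:tau-limit}), which is not literally among the corollary's hypotheses; you state this explicitly, while the paper leaves it implicit when invoking \cref{lem:parametric-stability} for the cluster points.
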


\begin{proof}
	By definition of $\Vop$, we have
	$\ourseq \geq \Vop(\ph_{\ss})$, and
	by \cref{lem:parametric-stability},
	$
		\lim_{\ss \to \infty} \Vop(\ph_{\ss}) \to \Vop(\p).
	$
	Therefore,
	\begin{equation} \label{eq:proof-app-liminf}
		\liminf_{\ss \to \infty} \ourseq \geq \Vop(\p).
	\end{equation}

	On the other hand,
	since the sequence $\{ x_{\trs(\ss)} \in X \}_{\ss}$ is bounded,
	and $L$ is continuous on $X$, $\ourseq$ has at least one
	cluster point and
	$
		\limsup_{\ss \to \infty} \ourseq < \infty.
	$
	Assume then, for the sake of contradiction, that there exists a cluster point
	$\bar{V} = \limsup_{\ss \to \infty} \ourseq > \Vop(\p)$.
    Since $\ph_{\ss} \to \p$, this implies, by continuity of $L$, that there must exist a limit point
	$\xx \notin \Xop(\p)$ of $\{ \xx_{\trs(\ss)} \}_{\ss}$, contradicting
	\cref{lem:parametric-stability}. We conclude that
	\begin{equation} \label{eq:proof-app-limsup}
        \limsup_{\ss \to \infty} \ourseq \leq \Vop(\p).
	\end{equation}
	Combining \eqref{eq:proof-app-liminf} and \eqref{eq:proof-app-limsup}
	completes the proof.
\end{proof}

    \subsection{Comparison with the Hoeffding bound} \label{sec:comparison-hoeffding}
We consider another instance of the example set-up from \cref{sec:numerical}, 
and compare \cadro{} using the 
the Hoeffding bound (\cref{prop:hoeffding})
and the 
ordered mean bound (\cref{prop:orm})
.

\Cref{fig:results-hoeffding} shows the 
cost estimate $\Vest_{\ss}$ and the out-of-sample
cost $V(\xest_{\ss}, \p)$ for the $\tvdro{}$ method and 
the aforementioned versions of \cadro.
We note that the radius of the ambiguity set for $\tvdro$
is computed using the 
Bretagnolle-Huber-Carol inequality \cite[Prop. A.6.6]{vandervaart_WeakConvergenceEmpirical_2000}
with slightly improved constants.
As this result is based on the same Hoeffding-type inequality as \cref{prop:hoeffding},
The apparent performance gains of \cadro{} with the Hoeffding bound
are thus to be attributed primarily to 
the geometry of the ambiguity set.
However, unlike divergence-based ambiguity sets, 
which rely on concentration inequalities to bound deviations of the 
distribution from the empirical mean, \eqref{eq:ambiguity-shape} does not require the 
use of concentration inequalities. Rather, any high-confidence upper bound on the 
mean of a scalar random variable satisfying the conditions of \cref{lem:consistency-conditions} may be used, 
allowing the use of more sophisticated approaches (e.g., \cref{prop:orm}).
This results in the improvements visible in \cref{fig:results-hoeffding},
without requiring alterations 
to the \ac{DRO} method itself.

\begin{figure}[ht!]
    {\footnotesize
    \centering
    \begin{minipage}{0.48\linewidth}
        \centering
        \includegraphics{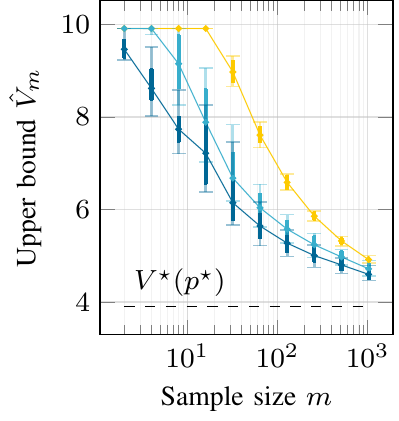}   
    \end{minipage}\hfill
    \begin{minipage}{0.48\linewidth}
        \centering
        \includegraphics{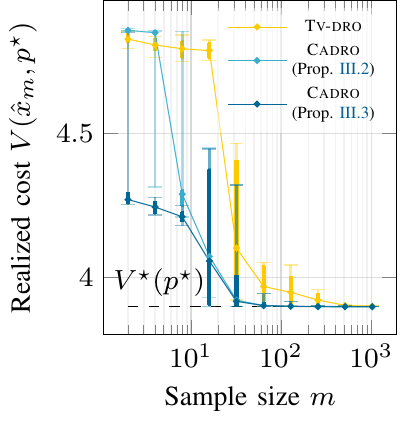}   
    \end{minipage}
    }
    \caption{
        Results for a problem instance as described in \Cref{sec:numerical}. 
        (left): 
         The cost estimates $\Vest_\ss$
        satisfying \eqref{eq:guarantee-coverage} and \eqref{eq:guarantee-consistency};
        (right): 
        True out of sample cost $V(\xest_\ss, \p)$.
        The points indicate the sample mean, 
        the solid errorbars indicate the empirical 0.95 (upper and lower) quantiles
        and the semi-transparent errorbars indicate the largest and smallest values over 100 independent runs. 
    }
    \label{fig:results-hoeffding}
\end{figure}
\end{appendix}
\fi

\end{document}